\newtheorem{thm}{Theorem}
\newtheorem{conj}[thm]{Conjecture}
\newtheorem{corr}[thm]{Corollary}
\newtheorem{lemma}[thm]{Lemma}
\newtheorem{question}[thm]{Question}
\theoremstyle{definition}
\newtheorem{defn}[thm]{Definition}
\theoremstyle{remark}
\newtheorem{rem}[thm]{Remark}
\begin{document}

\title{NP--hard problems naturally arising in knot theory}
\author{Dale Koenig and Anastasiia Tsvietkova}
\date{}
 \footnotesize
 \begin{abstract}
We prove that certain problems naturally arising in knot theory are NP--hard or NP--complete.
These are the problems of obtaining one diagram from another one of a link in a bounded number of Reidemeister moves, determining whether a link has an unlinking or splitting number $k$, finding a $k$-component unlink as a sublink, and finding a $k$-component alternating sublink.
\end{abstract}
\maketitle
\normalsize
\section{Overview}

Many problems that lie at the heart of classical knot theory can be formulated as decision problems, with an algorithm being a solution.
Among them, for example, is the question of equivalence (up to isotopy) of two links given by their diagrams.
This can be approached in many different ways, among which applying Reidemeister moves perhaps has the longest history.
Other examples are the unknotting, unlinking and splitting number questions, i.e.
arriving to a diagram of an unlink, an unknot, or a split diagram from some diagram by interchanging overpasses and underpasses in a certain number of crossings.

The complexities of these basic decision problems in knot theory are not yet well-understood.
Despite the lack of polynomial algorithms, few problems in knot theory are known to be NP--hard or NP--complete at all.
Those few problems are the problem of determining a bound on the genus of a knot in a general 3-manifold \cite{AgolHassThurston1} improved to knots in $S^3$ in \cite{Lackenby1}, the problem of detecting a sublink isotopic to a given link, and the problem of determining a bound for the Thurston complexity of a link  \cite{Lackenby1}.
The purpose of this paper to establish that a number of other natural problems in knot theory are NP--hard.
The problems described here have a advantage that they can be formulated solely in terms of link diagrams.
They are described below.
For an overview of complexity theory including NP--hardness and NP--completeness, see, for example, \cite{GareyJohnson1}.
The status of many decision and complexity problems from knot theory is discussed in \cite{Lackenby3}.

In section \ref{sec_unlinksublink} we look at the \textsc{unlink as a sublink} problem, a special case of the \textsc{sublink problem} defined and proven to be NP--hard by Lackenby \cite{Lackenby1}.
We prove that the \textsc{unlink as a sublink} problem is NP--hard.
Note that restricting a problem makes it easier, so the proof that the restricted problem is hard also implies that the general problem is hard.
Thus, NP--hardness of the \textsc{sublink problem} is a corollary of the NP--hardness of the problem that we consider.
The techniques used in the proof are widely used throughout this paper.
More generally, for any property $X$ of links one can also consider decision problems of the form "Given a diagram of a link L and a positive integer k, is there a k component sublink of $L$ with the property $X$?"  We show in section \ref{sec_alternatingsublink} that this is also NP--hard if $X$ is the property of being an alternating link.
We expect that many other problems of this form are NP--hard.

Very little is known about unknotting, unlinking and splitting numbers in general, without restricting to particular classes of links.
It is possible that these invariants are not even computable.
Moreover, it is not known whether there is an algorithm to detect whether a knot has unknotting number 1.
A recent breakthrough by Lackenby suggests an algorithm to determine whether a hyperbolic link satisfying certain restrictions has unlinking or splitting number one \cite{Lackenby5}, but no algorithm for the general case is yet known.
We provide the first lower bounds on the complexities of the general unlinking and splitting number problems in sections \ref{sec_unlinkingnum} and \ref{sec_splittingnum}.

Reidemeister showed that any two diagrams of the same link can be taken one to another by a sequence of Reidemeister moves \cite{Reidemeister1}.
The bounds for the number of moves have been a long-standing question, and increasingly good results have been obtained for a general case, as well as some special cases (see, for example, \cite{HassLagariasPippenger1,Lackenby2,Lackenby4}).
Such bounds are especially powerful since such a bound for a given link type gives an algorithm to detect that link type, and the existence of a polynomial bound ensures that the detection is in NP.
For an arbitrary link diagram $D$, a bound exponential in the number of crossings of $D$ has been proven to exist by \cite{CowardLackenby1}, so there exists an algorithm that can determine whether two diagrams are related by Reidemeister moves (and therefore whether the links are equivalent).
In section \ref{sec_reidbound} we provide the first lower bound on the complexity of this problem by proving that the decision problem of determining whether two diagrams are related by at most $k$ Reidemeister moves is NP--hard.

Every section that follows is devoted to one of the above problems.
While the most interesting results are perhaps the ones that concern Reidemeister moves and unlinking number, we put the sections in the order that makes it easier for the reader to follow the proofs, since some of the arguments can be seen as refinements of the others.

\section{The unlink as a sublink problem}
\label{sec_unlinksublink}

The \textsc{sublink problem} asks "Given diagrams of two links, is there a sublink of the first that is isotopic to the second?"  Lackenby showed that this problem is NP--hard using a Karp reduction (see Section 3.1.2 in \cite{Goldreich}) from the \textsc{hamiltonian path} problem \cite{Lackenby1}.
Here we examine the \textsc{unlink as a sublink} problem, in which the second link is an unlink.

\vspace{0.14in}

\textsc{unlink as a sublink}: given a diagram for a link $L$ and a positive integer $k$, is there a $k$-component sublink of $L$ that is an unlink?

\begin{thm}
\textsc{unlink as a sublink} is NP--complete.
\label{thm_unlinksublink}
\end{thm}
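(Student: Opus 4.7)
The plan is to establish NP-completeness by showing both containment in NP and NP-hardness. For containment, a polynomial-size certificate consists of a choice of $k$ components of $L$ together with a witness that their sublink is an unlink; the latter has polynomial size because unlink recognition is in NP, building on the work of Hass--Lagarias--Pippenger and Lackenby. For NP-hardness, the natural strategy is a Karp reduction from the \textsc{independent set} problem. Given a graph $G=(V,E)$ and integer $k$, I would construct a link diagram $L_G$ with one unknotted component $C_v$ per vertex $v$, and for each edge $\{u,v\}\in E$ attach a small, localized Hopf-style clasp between $C_u$ and $C_v$ so that $\mathrm{lk}(C_u,C_v)=\pm 1$. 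Each clasp is supported in a small ball disjoint from every other clasp, and the resulting diagram has $O(|V|+|E|)$ crossings, so the reduction is polynomial time.

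The correctness claim to verify is that $L_G$ has a $k$-component unlink sublink if and only if $G$ has an independent set of size $k$. The reverse direction is immediate from the homological invariance of pairwise linking numbers: if $S\subseteq V$ contains an edge $\{u,v\}$, then $\mathrm{lk}(C_u,C_v)=\pm 1$ inside the sublink $L_G|_S$, so $L_G|_S$ cannot be an unlink. For the forward direction, when $S$ is an independent set, each component $C_v$ with $v\in S$ has all of its clasps paired with components $C_u$ for $u\in N(v)\setminus S$, so every clasp on $C_v$ involves a component absent from the sublink. Because each clasp is local, removing the partner strand allows the clasp to be slid off, and iterating these local isotopies converts $L_G|_S$ into a disjoint union of planar unknots.

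The main obstacle is justifying the locality argument carefully enough to rule out Brunnian-type entanglement among three or more independent components. Concretely, one needs each Hopf clasp to be confined to a ball meeting $L_G$ only in the two paired strands, so that deleting either paired component trivially frees the clasp on the surviving component, and also so that clasps belonging to different edges never interact. This is a matter of setting up the construction with enough geometric care, but the entire reduction stands or falls on whether this locality is made rigorous; once it is, the forward direction follows by iterating the local isotopies one clasp at a time. As the authors remark, this is the simplest setting of a technique used again in later sections, so the clasp gadget and the locality argument are the template that subsequent constructions will refine.
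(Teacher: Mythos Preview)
Your approach is correct. The locality concern you flag is resolvable: arrange the clasp for edge $e_j$ inside a ball $B_j$ meeting only the two relevant components, with the $B_j$ pairwise disjoint and the link trivial outside $\bigcup_j B_j$ (for instance, stack the clasps at distinct heights). For an independent set $S$ each $B_j$ then meets $L_G|_S$ in at most one arc, unknotted rel boundary in $B_j$, and retracting these arcs leaves a trivial link. Your crossing bound $O(|V|+|E|)$ is optimistic---a straightforward layout gives something closer to $O(|V|\cdot|E|)$---but that is still polynomial, and your NP-membership argument matches the paper's.

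The paper, however, takes a genuinely different route, reducing from \textsc{3-SAT} rather than \textsc{Independent Set}. Each variable $x_i$ becomes a Hopf link with components labeled $x_i$ and $\neg x_i$; each clause $z_\alpha\vee z_\beta\vee z_\gamma$ becomes a single \emph{clause component} drawn as a loop realizing the iterated commutator $[z_\alpha,[z_\beta,z_\gamma]]$ in the Wirtinger generators of the variable-link complement, so that it links its three variable components in a Brunnian way and becomes unlinked exactly when one of them is deleted. An $(n+m)$-component unlink sublink is then forced to pick one side of each Hopf pair and to include every clause component, which encodes a satisfying assignment. Your reduction is more elementary---it needs only pairwise linking numbers and local isotopies, no free-group computations or Brunnian linking---but your closing remark is off: the template the paper actually refines in later sections is the \textsc{3-SAT}/commutator construction (Whitehead-doubling the variable components, invoking Lemma~\ref{lem_unlinking}, combining clause components into one curve for \textsc{Splitting Number}), and those arguments would not go through starting from your Hopf-clasp gadget.
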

\begin{proof}
We first prove NP--hardness by providing a Karp-reduction of the \textsc{3--SAT} problem to the given problem.
Suppose there are variables $x_1 \dots x_n$ and clauses $c_1\dots c_m$, in a formula $F$, where each clause is of the form
\begin{center}$z_{\alpha_i} \vee z_{\beta_i} \vee z_{\gamma_i}$, $1 \le \alpha_i < \beta_i < \gamma_i \le n$,
\end{center}

\noindent and $z_k$ represents either $x_k$ or $\neg x_k$.
We construct a $(2n+m)$-component link $L^F$ such that there exists an $(n+m)$-component unlink as a sublink if and only if there is a variable assignment to $x_1 \dots x_m$ satisfying all the clauses $c_1 \dots c_m$.

To begin, create $n$ Hopf links, one for each variable $x_\alpha$.
See Figure \ref{fig_hopf}.
Denote this link by $L_0^F$.

\begin{figure}[ht]
\centering
\includegraphics[height=2.1in]{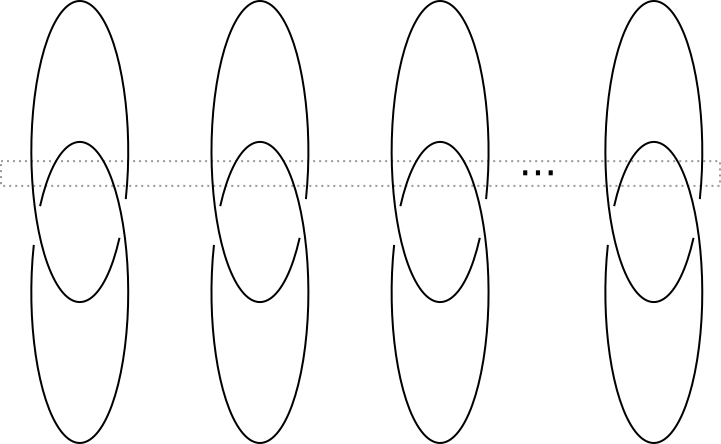}
\caption{Hopf links, one for each variable $x_i$.}
\label{fig_hopf}
\end{figure}

Label the top components of the Hopf links $x_1 \dots x_n$ and the bottom components $\neg x_1 \dots \neg x_n$.
Zoom in on the grey dotted box in Figure \ref{fig_hopf} and divide it vertically into $m$ shorter boxes as in Figure \ref{fig_relationarea}.
For each clause $c_i$, we will add one more link component which we call a clause component lying in the $i$th grey dotted box.
In this box we see two strands of the link $L_0^F$ labelled $x_\alpha$ and two strands labelled $\neg x_\alpha$ for each $\alpha$.
Pick a single strand for each label, and leave the remaining strands unlabeled.

\begin{figure}[ht]
\centering
\includegraphics[height=1.8in]{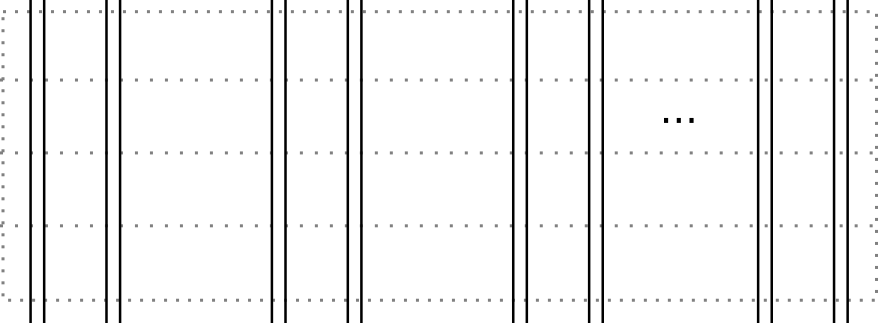}
\caption{Dividing the box into m shorter boxes vertically, to put each clause component in.
}
\label{fig_relationarea}
\end{figure}

Consider the Wirtinger presentation of the link group $\pi_1(S^3-L^F_0)$. A word in the generators corresponds to a string of arcs/arrows
passing under the labeled strands, passing left to right for $x$ and $\neg x$, and right to left for $x^{-1}$
and ${\neg x}^{-1}$. By placing such arrows from top to bottom in the diagram, and stringing them together
using arcs which pass over the strands in the diagram, we can introduce a loop representing any word
which has no self-crossings. This construction is suggested in, for example, \cite{Rolfsen1}, and Figure \ref{fig_clauselink} gives an example. In this way, draw a loop corresponding to the word $[z_{\alpha_i}, [ z_{\beta_i}, z_{\gamma_i}]]$ (an iterated commutator).
Let the clause component corresponding to $c_i = z_{\alpha_i} \vee z_{\beta_i} \vee z_{\gamma_i}$  be this loop. For the formula $F$, we then obtain the link $L^F$ by adding all clause components to $L_0^F$.

Each clause component links with three variable components in a Brunnian way.
Indeed, an iterated commutator is trivial as a group element if any of the commuting elements is trivial, and the associated word can be reduced to the empty word by cancelling adjacent generators and inverses.
Consider a clause $c_i = z_{\alpha_i} \vee z_{\beta_i} \vee z_{\gamma_i}$ and assume we have erased one of $z_{\alpha_i}$, $z_{\beta_i}$, or $z_{\gamma_i}$, so the iterated commutator can be reduced to the empty word by such cancellations.
With the way we have drawn the clause component, each cancellation in the commutator gives a sequence of type II Reidemeister moves simplifying the diagram.
As an illustration, in our example in Figure \ref{fig_clauselink}, after removing one of $\neg x_1, \neg x_3$, or $x_4$, the clause component can be pulled away from the other components to become an unlinked unknot.

It remains only to show that the clause component links the three variable components in a nontrivial way.
This follows from the fact that the fundamental group of the complement of the three variable components is free, and the clause component represents a nontrivial group element in that free group. It is therefore not
freely homotopic to a trivial loop, since free homotopy corresponds to conjugation in the group, and
so cannot be isotopic to a trivial loop.

\begin{figure}[ht]
\centering
\includegraphics[height=3in]{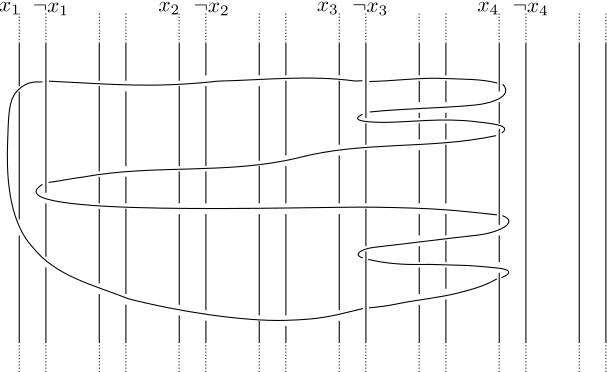}
\caption{A link representing the commutator $[\neg x_1,[\neg x_3,x_4]]$, corresponding to the clause $ \neg x_1 \vee \neg x_3 \vee x_4$.}
\label{fig_clauselink}
\end{figure}

Each clause component has precisely ten undercrossings, and has at most $8n$ crossings between two such undercrossings.
Indeed, between undercrossings it passes over each of the $4n$ strands in Figure \ref{fig_relationarea}  at most twice (in fact, at most once except when returning from the bottom back to the top).
It follows that the number of crossings introduced for each new clause component is at most $64n + 10$.
There are $m$ clauses and $2n$ crossings corresponding to the $n$ Hopf links, so the final diagram has at most $2n+m(64n+10)$ crossings, which is polynomial in the size of the input formula $F$.

We now argue that the link $L^F$ contains an $(n+m)$-component unlink as a sublink if and only if the corresponding formula $F$ is satisfiable.

Suppose there is some set of variable assignments making $F$ true.
If the variable $x_\alpha$ is assigned to be TRUE, then delete the component labelled $x_\alpha$ from the link $L^F$, and if it is assigned FALSE, delete the component labelled $\neg x_\alpha$.
For each clause component $c$ of $L^F$ that corresponds to a commutator $[z_{\alpha_i}, [ z_{\beta_i}, z_{\gamma_i}]]$, one of $z_{\alpha_i}$, $z_{\beta_i}$, or $z_{\gamma_i}$ is TRUE, and the link component with the respective label was deleted.
Hence the clause component $c$ is unlinked from the rest of the diagram of $L$ by the Brunnian property.
After pulling away all the $m$ clause components, we are left with the $n$ unknotted components (``halves") of the Hopf links, so we have an $(n+m)$-component unlink.

Conversely, assume the link $L^F$ contains an $(n+m)$-component unlink $U$ as a sublink.
We want to prove that $F$ is satisfiable.
The sublink $U$ must contain at most one component from each of $n$ Hopf links, since otherwise it contains pairs of components with nonzero linking number.
$U$ must therefore contain every clause component and exactly one component of each of the $n$ Hopf links in order to have $n+m$ components total.
This corresponds to the variable assignment determined by deleting the component labelled either by $x_\alpha$ or  by $\neg x_\alpha$ for each $\alpha$.
If any clause of the formula $F$ is not satisfied as a result of this assignment, the corresponding clause component will form a non-trivial Brunnian link with the three respective ``halves" of Hopf links, and thus $U$ is not an unlink, a contradiction.

We have therefore reduced the 3--\textsc{SAT} problem to the \textsc{unlink as a sublink} problem.
The number of crossings of the diagram of $L^F$ is bounded by a polynomial in the size of $F$.
This implies we have a Karp reduction, so \textsc{unlink as a sublink} is NP--hard.

To see that the problem is NP, we observe that a polynomial length certificate consists of a choice of $k$ link components followed by a sequence of Reidemeister moves (and resulting diagrams) converting the corresponding $k$-component link diagram to the trivial diagram of a $k$-component sublink.
The number of Reidemeister moves needed to split the diagram is polynomial as per \cite{HassLagariasPippenger1, Lackenby2}, and this splitting needs to be performed up to $k-1$ times.
The number of moves needed to show each component is an unknot is polynomial by \cite{HassLagariasPippenger1, Lackenby2} as well.
Moreover, the size of the diagram at each step is bounded by a polynomial in the size of the original diagram, since the number of crossings is fewer than the original number of crossings plus the number of Reidemeister moves performed.
Thus there is a polynomial length certificate showing that the original link contains a $k$-component unlink as a sublink, and \textsc{unlink as a sublink} is in NP.
\end{proof}
Note that NP--hardness of the \textsc{sublink problem} as proven in \cite{Lackenby1} follows immediately as a corollary, since every instance of \textsc{unlink as a sublink} is also an instance of the \textsc{sublink problem}.

\vspace*{1.5cm}
\section{Unlinking number}\label{sec_unlinkingnum}

We next consider the problem of calculating the unlinking number of a link.
The \emph{unlinking number} of a link is the minimum number of crossing changes required to become an unlink, minimized over all diagrams of the link.
The definition can also be formulated without reference to knot diagrams as is described below.

\begin{defn}
Suppose $L$ is a link and $\alpha$ a simple arc with endpoints lying on $L$ and disjoint from $L$ otherwise.
An \emph{unlinking move} corresponding to the arc $\alpha$ is a homotopy of $L$ that is the identity outside of a regular neighborhood $N(\alpha)$ of $\alpha$, and within $N(\alpha)$ performs a move in which the link passes over itself once as in Figure \ref{fig_unlinkingmove}.\end{defn}

\begin{figure}[ht]
\centering
    \begin{subfigure}[b]{0.5\textwidth}
        \centering
        \includegraphics[height=1.5in]{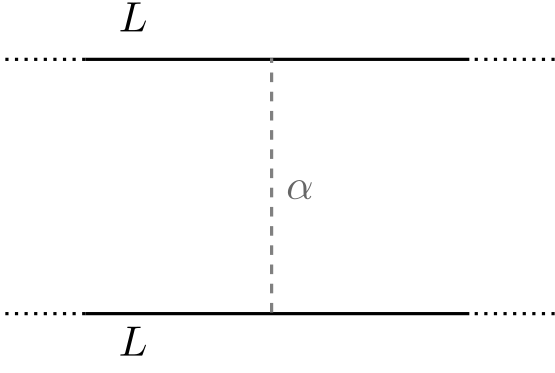}
        \caption{}
    \end{subfigure}%
~~~~~
    \begin{subfigure}[b]{0.5\textwidth}
        \centering
        \includegraphics[height=1.5in]{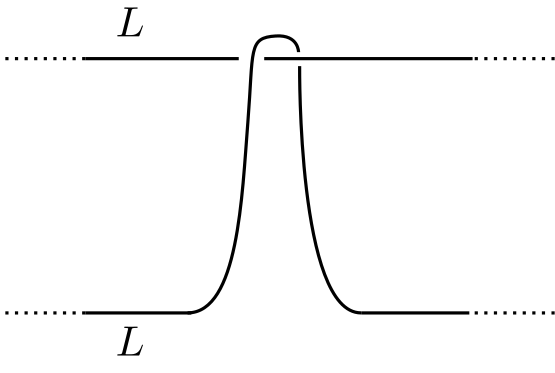}
        \caption{}
    \end{subfigure}

\caption{An example of an unlinking move replacing (\textsc{a}) with (\textsc{b}).}
\label{fig_unlinkingmove}
\end{figure}

An arc $\alpha$ alone does not uniquely determine a given unlinking move, as the link in Figure \ref{fig_unlinkingmove} (\textsc{b}) might twist around the arc.
Rather, there are infinitely many different unlinking moves that can be performed for any given arc.
However, for our purposes distinguishing these moves is not important;  we only need a way to represent where an unlinking move occurs.

A finite collection of unlinking moves can be represented by a collection of disjoint arcs with disjoint regular neighborhoods.
Then each unlinking move does not affect the neighborhoods of the arcs associated with the other unlinking moves.
Therefore, all unlinking moves in such a collection can be performed simultaneously or in any order without changing the resulting link.

With this definition, the unlinking number of a link can be defined as the smallest number of unlinking moves needed to change the link into the unlink. Since unlinking moves are defined by paths in the link complement, they are independent of the link diagram. Hence one can take the smallest number of unlinking moves for one given link diagram, unlike with crossing changes, where the number of changes is minimized across all link diagrams.

Nonetheless, the two definitions of unlinking number are equivalent. Any crossing change is an unlinking move corresponding to the vertical arc connecting the two points of the crossing.
For the other direction, given a collection of unlinking moves, first perform an isotopy of $L$ to make all of the unlinking move arcs perpendicular to a plane $\Pi$.
This can be done by first choosing a thin regular neighborhood of each arc, performing an isotopy that shrinks the arcs until they lie in small disjoint Euclidean balls, followed by an ambient isotopy that is the identity outside the set of balls to straighten the arcs.
These isotopies will also move the link $L$.
Once all arcs are straight and perpendicular to $\Pi$, project $L$ to $\Pi$.
After applying a small perturbation of $L$ away from the arcs if necessary, the result of this projection will be a link diagram by general position arguments. Then each unlinking move's arc is a crossing arc in the diagram.
However, it is not yet true that the unlinking moves correspond to the obvious crossing changes.  If the unlinking move twists about its arc, we must additionally perform an isotopy to remove this twisting.  Such an isotopy will introduce new crossings, and by slightly perturbing the diagram we can ensure again that all crossings are distinct.  At the end of this process, each unlinking move corresponds to a crossing change in the diagram as desired.

Thus, given a diagram of $L$ and a set of $n$ crossing changes turning $L$ into the unlink, there is a set of $n$ unlinking moves making $L$ into an unlink.
Conversely, if there is a set of $n$ unlinking moves making $L$ into an unlink then there is a diagram for which $n$ crossing changes make $L$ an unlink.
Therefore, the minimum number of crossing changes needed to make $L$ an unlink is the same as the minimum number of unlinking moves needed.

\vspace{0.14in}
\textsc{unlinking number}: Given a link diagram and an integer $n$, does the link  have unlinking number $n$?
\vspace{0.14in}

It is not known whether \textsc{unlinking number} is computable, and it is not expected to be in NP \cite{Lackenby3}.
Below, we prove that it is NP--hard (Theorem \ref{thm_unlinking}).
The difficulty with finding a polynomial certificate lies in the fact that crossing changes required to efficiently get an unlink may not be visible in a given diagram.
However one can consider the following restricted problem.

\vspace{0.14in}
\textsc{diagrammatic unlinking number}: Given a link diagram $D$ and an integer $n$,  can the link be made into an unlink by $n$ crossing changes in the diagram $D$?
\vspace{0.14in}

The restricted problem is NP--complete, which we will prove as a corollary to the proof of Theorem \ref{thm_unlinking}.

\begin{thm}
\textsc{unlinking number} is NP--hard.
\label{thm_unlinking}
\end{thm}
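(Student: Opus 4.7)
The plan is to Karp-reduce \textsc{3--SAT} to \textsc{unlinking number}, building on the ideas of Theorem~\ref{thm_unlinksublink}. Given a formula $F$ with $n$ variables and $m$ clauses, I would construct in polynomial time a link $L^F$ together with an integer $N=N(n,m)$ such that the unlinking number $u(L^F)\le N$ iff $F$ is satisfiable. The clause components will again be drawn as iterated commutators $[z_{\alpha_i},[z_{\beta_i},z_{\gamma_i}]]$ through a strip, as in Theorem~\ref{thm_unlinksublink}. The variable gadgets, however, need to be re-engineered so that a single unlinking move in a gadget simultaneously (a) accounts for its nonzero linking number, and (b) ``frees'' the literal strand corresponding to the selected truth value from the clause-component strip. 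This last point is what component deletion was doing in Theorem~\ref{thm_unlinksublink}, and here it must be simulated by an unlinking move: the gadget should admit two distinguishable unlinking moves --- one for $x_\alpha=\mathrm{TRUE}$ and one for $x_\alpha=\mathrm{FALSE}$ --- each of which converts the corresponding literal strand into an unknotted, unlinked loop.

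With such gadgets, and $N=n$, the equivalence is established in two directions. The lower bound $u(L^F)\ge n$ is immediate: each variable gadget contains a pair of components with nonzero linking number, an unlinking move changes the linking number of a single pair by at most $\pm 1$, and the $n$ gadgets are disjoint, so at least $n$ moves are required. For the upper bound, given a satisfying assignment, performing the corresponding unlinking move in each gadget splits every gadget and releases the chosen literal strand; by the Brunnian property used in Theorem~\ref{thm_unlinksublink}, each clause component can then be pulled free by Reidemeister moves alone, yielding an unlink, so $u(L^F)\le n$. Conversely, from any length-$n$ unlinking sequence the linking-number count forces exactly one move per gadget; the pattern of choices defines a truth assignment, and any unsatisfied clause would leave its clause component representing a nontrivial iterated commutator in the free fundamental group of the complement of the residual components, contradicting unlinkedness.

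The main obstacle is making the converse direction rigorous: unlinking moves are defined by arcs in the link exterior rather than crossings of the given diagram, so an optimal unlinking sequence need not ``live'' in the gadgets in any combinatorial sense. I plan to handle this with a group-theoretic argument in the spirit of the last paragraph of the proof of Theorem~\ref{thm_unlinksublink}: once the $n$ linking-number-killing moves are accounted for, each gadget has been split in one of the two types corresponding to TRUE/FALSE, and the induced quotient of the Wirtinger group forces the iterated commutator of each clause component to be trivial only when the associated clause is satisfied. Finally, the NP--completeness of \textsc{diagramatic unlinking number} will follow as a corollary, since the $n$ unlinking moves witnessing $u(L^F)\le n$ can be realized as crossing changes in the constructed diagram, and the resulting unlink admits a polynomial-length Reidemeister certificate as in the proof of Theorem~\ref{thm_unlinksublink}.
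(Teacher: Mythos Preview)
Your overall plan is close to the paper's, but there is a real gap in the converse direction, at precisely the point you flag as ``the main obstacle.'' The linking-number count does localize each of the $n$ unlinking moves to a gadget, in the sense that the move in gadget $i$ must be along an arc joining the $x_i$-component to the $\neg x_i$-component. It does \emph{not}, however, force that move to be either of your two distinguished moves. An unlinking move along such an arc may twist around it arbitrarily, leaving $x_i$ and $\neg x_i$ knotted and linked with each other (and with the clause strands threaded through the strip) in an essentially uncontrolled way. Thus the sentence ``each gadget has been split in one of the two types corresponding to TRUE/FALSE'' is exactly the statement that needs proof and does not follow from the linking-number bookkeeping. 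Your proposed group-theoretic fix would have to analyze the fundamental group of the complement of the variable components \emph{after} $n$ arbitrary unlinking moves, and there is no reason this group is free or that the iterated-commutator word behaves as in Theorem~\ref{thm_unlinksublink}.

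The paper closes this gap with two devices that are absent from your proposal. First, it replaces every component of $L^F$ by an untwisted Whitehead double, so all pairwise linking numbers vanish and no linking-number argument is used; instead, each of the $n$ Whitehead-doubled Hopf sublinks is nontrivial and can only be split by a move confined to it, which again forces one move per gadget. Second --- and this is the key missing idea --- it proves a move-replacement lemma (Lemma~\ref{lem_unlinking}): whenever an unlinking sequence contains a move involving a component $K$ that admits a single ``unclasping'' move turning $K$ into an unlinked unknot, the entire sequence can be replaced by one no longer, beginning with that unclasping move, and still yielding an unlink. Iterating this lemma converts an arbitrary length-$n$ unlinking sequence into $n$ standard unclasping moves, each of which genuinely removes one literal component, after which the proof of Theorem~\ref{thm_unlinksublink} applies verbatim. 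Your outline would become a proof once you supply either such a replacement lemma or an explicit gadget in which \emph{every} unlinking move between its two components is equivalent to one of the two distinguished moves; the paper takes the former route.
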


We will prove this by modifying our construction for the \textsc{unlink as a sublink problem}.
In $L^F$,  we replace each variable component with its untwisted Whitehead double.
Figure \ref{fig_hopfwhitehead} demonstrates the replacement for a single Hopf link.
We continue to refer to the Whitehead doubled variable components as variable components.

\begin{figure}[ht]
\centering
    \begin{subfigure}[b]{0.4\textwidth}
        \centering
        \includegraphics[height=1.5in]{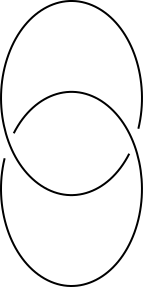}
        \caption{}
    \end{subfigure}%
~
    \begin{subfigure}[b]{0.4\textwidth}
        \centering
        \includegraphics[height=1.5in]{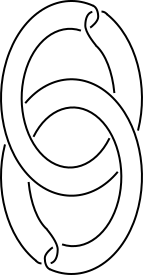}
        \caption{}
    \end{subfigure}

\caption{(\textsc{a}) A Hopf link and (\textsc{b}) the link obtained by replacing each component with a Whitehead double.}
\label{fig_hopfwhitehead}
\end{figure}

In the resulting link, any variable component can be made into an unknot unlinked from the remainder of the diagram using a single crossing change.
In fact, there are two such distinct crossing changes, but they determine only one unlinking move. This is the same unlinking move that is determined by any crossing change in a standard diagram of a Whitehead link.
We call this unlinking move an \emph{unclasping move}.
Using $n$ such unlinking moves followed by Reidemeister moves we can replace $n$ variable components from the connected diagram that we have with an $n$-component unlink, disjoint from the rest of the diagram.
Moreover, if there exists a set of $n$ unlinking moves resulting in the unlink, then Lemma \ref{lem_unlinking}(a) asserts that there exists another set of $n$ unlinking moves, also resulting in the unlink, such that any move involving a variable component is an unclasping move for that component.
The second part of the lemma (Lemma \ref{lem_unlinking}(b)) will be used in Section \ref{sec_splittingnum}.

Here and further, if $L$ is a link and $K$ is a component of $L$, denote by $L \backslash K$ the sublink of $L$ consisting of components other than $K$.

\begin{lemma}
\label{lem_unlinking}
Suppose $L$ is a link, and $K$ is a component of $L$.
Let $C=c_1, c_2, ..., c_n$ be a set of $n$ unlinking moves of $L$, at least one of which involves the component $K$.
Let $c$ be an unlinking move of $L$ that involves the component $K$ and results in $K$ becoming an unknot split from the other components of $L$.
Then we can replace $C$ with another sequence $C'$ of unlinking moves beginning with $c$ such that
\vspace{0.1in}

(a) $C'$ has length equal or less than that of $C$.

(b) If $C$ results in $L$ being an unlink, so does $C'$.
\vspace{0.1in}

(c) If $P$ is some component of $L \backslash K$ and $C$ results in $P$ being split from $L \backslash P$, so does $C'$.
\end{lemma}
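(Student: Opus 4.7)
The plan is to partition the moves of $C$ into those involving $K$ and those that do not, then construct $C'$ by prepending $c$ to the moves that do not. Write $C = C_K \sqcup C_{\bar{K}}$, where $C_K$ consists of the moves in $C$ whose arc has at least one endpoint on $K$ and $C_{\bar{K}}$ is the complement. By hypothesis $|C_K| \geq 1$, so $|C_{\bar{K}}| \leq n-1$. I would set $C' = (c, C_{\bar{K}})$, an ordered sequence of $1 + |C_{\bar{K}}| \leq n$ unlinking moves beginning with $c$, which immediately gives (a). The moves in $C_{\bar{K}}$ remain valid after $c$: their arcs have endpoints on $L \backslash K$, which is undisturbed up to isotopy by $c$, and any intersections of these arcs with the split unknot $U_K$ produced by $c$ can be removed by first isotoping $U_K$ out of the way.

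The key structural observation is that \emph{any unlinking move involving $K$ leaves $L \backslash K$ unchanged up to isotopy.} Indeed, one can shrink the defining regular neighborhood $N(\alpha)$ so that $N(\alpha) \cap L$ consists only of the two short strands meeting the endpoints of $\alpha$. The move is a homotopy of $L$ supported in $N(\alpha)$, and since at least one of these two strands lies on $K$, the restriction of the homotopy to $L \backslash K$ has no self-crossings and hence is an ambient isotopy of $L \backslash K$ in $S^3$. Consequently, performing $C$ and performing $C'$ produce the same isotopy class of $L \backslash K$, namely the one obtained by applying $C_{\bar{K}}$ alone to $L \backslash K$.

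Parts (b) and (c) then follow almost formally. For (b), if $C$ yields an unlink, then the sublink $L \backslash K$ of that unlink is itself an unlink, and by the observation it coincides with the image of $L \backslash K$ under $C_{\bar{K}}$. The output of $C'$ is therefore the disjoint union of this unlink with $U_K$, which is still an unlink. For (c), any component $P \subset L \backslash K$ that becomes split from $L \backslash P$ after $C$ does so purely because of the effect of $C_{\bar{K}}$ on $L \backslash K$; since $U_K$ is split from everything in the output of $C'$, the component $P$ remains split from the rest.

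The main obstacle, and the step I would work out most carefully, is justifying the structural observation when a move in $C_K$ has its other endpoint on some component $K' \subset L \backslash K$. One must check that the local passing-over move, viewed after $K$ is deleted, reduces to an ambient isotopy of the $K'$-strand inside $N(\alpha)$ rather than a genuine crossing change on $L \backslash K$. This follows from the explicit local picture of the move in $N(\alpha)$, but is worth recording carefully, together with the auxiliary observation that the arcs defining the moves in $C_{\bar{K}}$ can be isotoped to avoid $U_K$ once $c$ has been performed.
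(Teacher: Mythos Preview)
Your proposal is correct and follows essentially the same strategy as the paper: discard the moves of $C$ involving $K$, prepend $c$, retain the moves not involving $K$, and argue that the effect on $L\setminus K$ is unchanged while $K$ ends up as a split unknot. The only cosmetic difference is in how the two handle the possibility that, after performing $c$, the arcs defining $C_{\bar K}$ meet the new position of $K$: the paper isotopes each such arc rel endpoints in the complement of $L\setminus K$ so that it misses the disk $D$ bounded by $K$, whereas you propose instead to isotope $U_K$ itself out of the way; these are dual maneuvers and either one secures the conclusion that $K$ remains split after $C_{\bar K}$ is performed.
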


\begin{proof}
Since the first unlinking move of $C'$ is $c$, the component $K$ is unknotted and unlinked from $L\backslash K$ after $c$ is applied, and $L\backslash K$ is unchanged.
Let $D$ be the disk bounded by $K$ after $c$ is applied.
We now construct $C'$ iteratively by modifying moves from $C$.

For each move of $C$, if the move involves the component $K$, we do not include the move in $C'$.
We will obtain the remaining moves of $C'$ by modifying the moves of $K$ that do not involve $K$.
Since at least one move of $C$ is assumed to involve $K$, this ensures that condition (a) is upheld.
If a move $c_i$ does not involve $K$, and the path determining $c_i$ does not intersect the disk $D$, we add $c_i$ to $C'$ unmodified.
If the path does pass through $D$, we isotope the path relative its boundary in the complement of $L \backslash K$ to make it disjoint from $D$. Figure \ref{ModifyMove} shows a link with a component $K$ that is unlinked and unknotted. In the diagram on the left, the green arc corresponds to an unmodified unlinking move (a crossing change), and passes through the disk $D$ bounded by $K$. In the diagram on the right, the green arc is isotoped in the complement of $L \backslash K$ (but not in the complement of $L$) to correspond to a modified unlinking move that avoids $D$. This modification is always possible  when $K$ is unlinked from $L\backslash K$.
Note that since we isotope in the complement of $L\backslash K$, the path must intersect $K$ at some point in time during the isotopy.
We add the modified unlinking move to $C'$, and repeat for each unlinking move $c_i$ of $C$

\begin{figure}[ht]
\centering

        \includegraphics[height=1.6in]{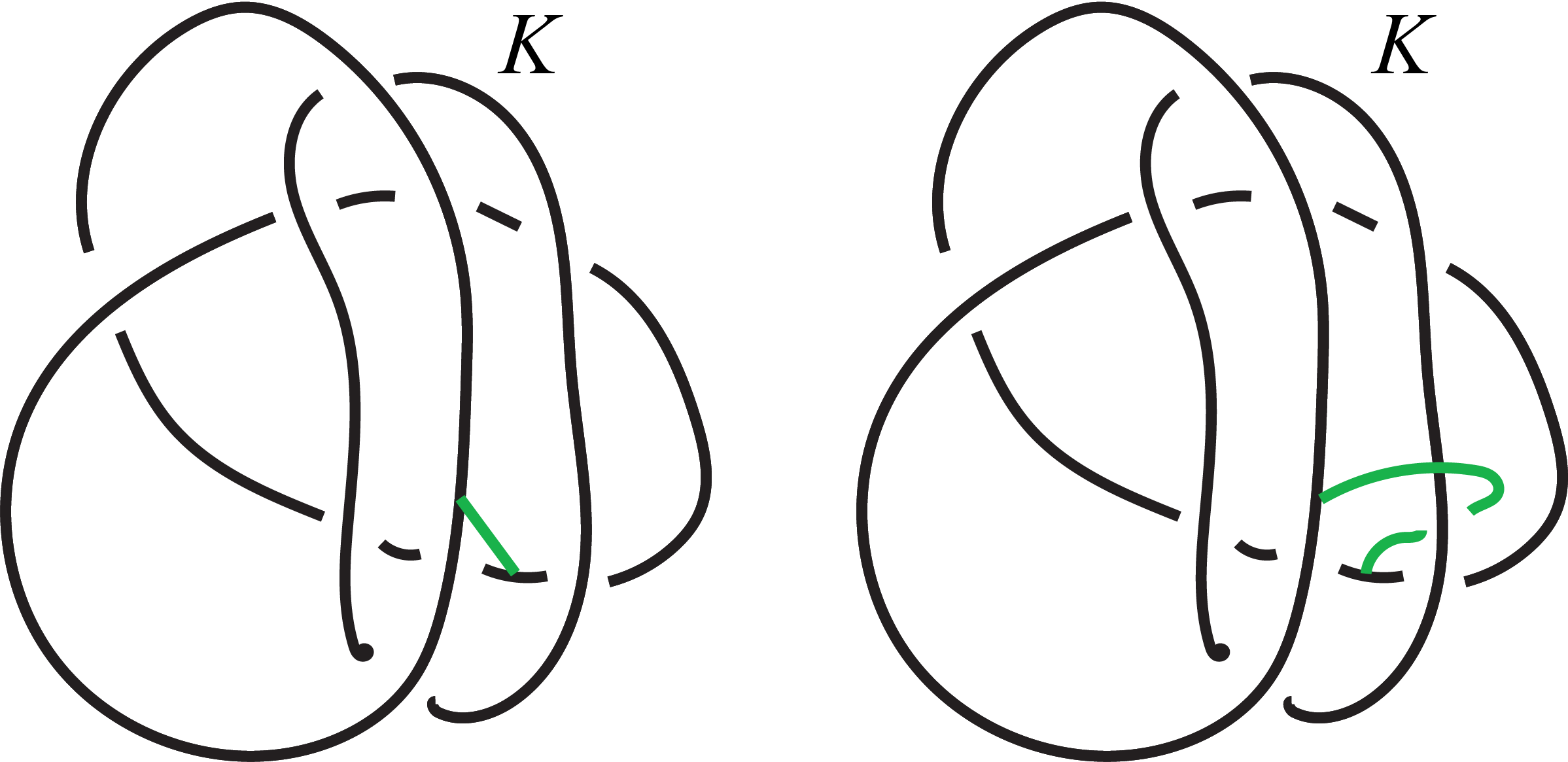}

\caption{Modifying an unlinking move from $C$.}
\label{ModifyMove}
\end{figure}

Let $C' = c, c_1', ..., c_m'$, where for notational convenience we do not assume that $c_i'$ is the result of modifying $c_i$ (for example, if $c_1$ involved $K$ then we would have skipped it, so $c_1'$ would be a result of modifying $c_2$).
Since the paths determining the modified moves were constructed to be disjoint from $D$, the disk $D$ is disjoint from $L\backslash K$ after performing $C'$.
Therefore, after performing all unlinking moves of $C'$, $K$ must be an unlinked unknot.
To evaluate conditions (b) and (c), we therefore only need to analyze how the moves of $C'$ affect $L \backslash K$.
If we restrict both $C$ and $C'$ to the subset of moves that do not involve $K$, then the sets of restricted moves act similarly on $L\backslash K$, since the paths determining the moves only differ by an ambient isotopy of $L \backslash K$. So (b) and (c) hold.

\end{proof}

We can now prove Theorem \ref{thm_unlinking}.

\begin{proof}[Proof of Theorem \ref{thm_unlinking}]
For a given 3--\textsc{SAT} problem $F$ with $n$ variables and $m$ clauses, take the associated link $L^F$ constructed for \textsc{unlink as a sublink}, and modify it by replacing every component with an untwisted Whitehead double.
Denote this link by $L_{*}^F$.
In this diagram, unclasping moves for each Whitehead doubled component are visible as crossing changes in the diagram. Each Whitehead doubled component bounds an obvious disk in the diagram after performing such an unclasping move.

Suppose there is a sequence $C$ of $n$ unlinking moves of $L_{*}^F$ resulting in an unlink.
A Whitehead doubled Hopf link cannot be unlinked without an unlinking move that only involves the components of that Whitehead doubled Hopf link. Indeed, moves involving other components will not split the two components of a Whitehead doubled Hopf link from each other, and a crossing change between the
two components will give them non-zero linking number. Note however that once a component of Whitehead doubled Hopf link is split from all other components, it is an unlinked unknot. Denote the Whitehead doubled Hopf sublinks by $W_1, W_2, ..., W_n$. Since there are $n$ distinct Whitehead doubled Hopf sublinks of $L_{*}^F$ and $C$ consists of $n$ unlinking moves, $C$ must consist of one move involving each of $W_i$. Note that such sequence $C$ is not unique: we just choose one such $C$.

We claim that by repeatedly applying Lemma \ref{lem_unlinking} to $L_{*}^F$ and one of the components of each $W_i$, we obtain another set of unlinking moves $C_*$ such that the $i$th move of $C_*$ is an unclasping move for either the $x_i$ or $\neg x_i$ component of $L_*^F$. Indeed, first find an unlinking move of $C$ involving one of the components $K_1$ of $W_1$.
Applying Lemma \ref{lem_unlinking} to $L$ and $K_1$ gives a sequence of unlinking moves $C'$ of equal or lesser length than $C$.
The first unlinking move $c_1'$ of $C'$ is an unclasping move for $K_1$.
By Lemma \ref{lem_unlinking}(b), the new sequence of moves $C'$ must still result in $L$ being an unlink.
Note that $C'$ must have a move involving one of the components $K_2$ of $W_2$, since $C$ had a move involving only $W_2$, and in the proof of Lemma \ref{lem_unlinking} we did not exclude such moves.
So we can again apply Lemma \ref{lem_unlinking} to $L$ and the component $K_2$ of $W_2$.
Recall that we are working with a particular link diagram of $L_*^F$, described in the beginning.
After performing an unclasping move $c_2'$ for $K_2$, the component $K_2$ bounds a disk visible in the link diagram, and the path determining $c_1'$ does not intersect this disk. Therefore, the application of Lemma \ref{lem_unlinking} does not modify $c_1'$, and we obtain a sequence beginning with $c_2', c_1'$.
Repeating this process, we can get a sequence of unlinking moves such that, up to reordering, its $i$th unlinking move is an unclasping move for either the $x_i$ or $\neg x_i$ component of $L_*^F$.

Deleting an unlinked unknot does not change whether a link is an unlink. So every unlinking move of $C_*$ can be diagrammatically seen as deleting a component of $L_*^F$, as in the proof of NP--hardness for \textsc{unlink as a sublink} (Theorem \ref{thm_unlinksublink}).

We now prove that 3--\textsc{SAT} formula $F$ is satisfied if and only if the link $L^F_*$ has unlinking number $n$.

As in the proof of Theorem \ref{thm_unlinksublink}, when formula $F$ is satisfied, we unclasp the variable components of $L^F_*$ labelled by the variables with values 1. Then the clause components can be isotoped away, and together with the final Whitehead doubles they form an unlink.

For the other direction,  suppose the unlinking number of $L^F_*$ is $n$. Above we proved that the set of $n$ unlinking moves $C$ involves each of $W_i$. Moreover, we proved above that we can substitute the sequence $C$ by the sequence $C*$ of the moves such that the $i$th move of $C_*$ is an unclasping move for either the $x_i$ or $\neg x_i$ component of $L_*^F$. We assign TRUE to the variable of $F$ for which the component of $L_F^*$ was unclasped. For a clause $c$ of $F$, consider the respective clause component of $L^F_*$ and the variable link components (Whitehead doubles) corresponding to the variables of $c$. They form a 4-component sublink $L_U$ of $L^F_*$. If $L_U$ is not an unlink, one of the variable components it includes is unclasped in $C_*$, and the clause $c$ is satisfied. But $L_U$ is a satellite link of a Brunnian link, and Brunnian link is not an unlink.  Hence $L_U$ contains an incompressible non-boundary parallel torus in its complement in $S^3$, and therefore cannot be an unlink as well.

Therefore a set of unlinking moves $C_*$ that results in $L_*^F$ becoming an unlink corresponds to a variable assignment satisfying the 3--\textsc{SAT} problem $F$.

The number of crossings in $L_*^F$ is at most four times the number of crossings in $L^F$ plus twice the number of components (the number of components is bounded by the number of crossings of $L^F$).
In the proof of Theorem \ref{thm_unlinksublink} we show that the number of crossings of $L^F$ is polynomial in the size of $F$, therefore so is the number of crossings of $L^F_*$. Hence the reduction above is polynomial in the size of $F$.
\end{proof}

\begin{corr}
\textsc{diagrammatic unlinking number} is NP--complete.
\end{corr}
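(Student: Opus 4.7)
The plan is to observe that the proof of Theorem \ref{thm_unlinking} already establishes the reduction we need, and then to produce a polynomial certificate using known bounds on recognizing the unlink by Reidemeister moves.

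For NP--hardness, I would reduce \textsc{3--SAT} to \textsc{diagramatic unlinking number} using exactly the same link $L_*^F$ and the same integer $n$ as in the proof of Theorem \ref{thm_unlinking}. Unclasping moves for the Whitehead doubled components are realized as actual crossing changes in the specific diagram of $L_*^F$ that we constructed, so if $F$ is satisfiable, the $n$ unclasping moves produced in the proof of Theorem \ref{thm_unlinking} directly yield $n$ crossing changes in the given diagram that turn $L_*^F$ into an unlink, showing the diagrammatic unlinking number is at most $n$. Conversely, if there is a set of $n$ crossing changes in the diagram of $L_*^F$ turning it into an unlink, these $n$ crossing changes are in particular $n$ unlinking moves, so by the argument in the proof of Theorem \ref{thm_unlinking} (applying Lemma \ref{lem_unlinking} repeatedly to obtain unclasping moves for $n$ components, one from each Whitehead doubled Hopf sublink) we get a satisfying assignment for $F$. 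Since each of the $n$ Whitehead doubled Hopf sublinks requires at least one unlinking move to separate its two components, the diagrammatic unlinking number is always at least $n$, so the answer to the \textsc{diagramatic unlinking number} instance with integer $n$ is yes iff $F$ is satisfiable. The diagram of $L_*^F$ has polynomially many crossings in the size of $F$, as already observed in the proof of Theorem \ref{thm_unlinking}, so this is a Karp reduction.

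For membership in NP, I would exhibit the following polynomial length certificate: a specification of which $n$ crossings of $D$ to change, together with a sequence of Reidemeister moves (and the intermediate diagrams) taking the resulting diagram to the trivial diagram of an unlink. As in the NP argument for \textsc{unlink as a sublink}, by \cite{HassLagariasPippenger1, Lackenby2} one can recognize each component of the modified diagram as an unknot and split the components from one another with a polynomial number of Reidemeister moves, and the size of the intermediate diagrams is bounded by the initial diagram size plus the number of moves performed, hence is also polynomial. A verifier checks that the prescribed crossing changes are legal crossing changes in $D$, that the Reidemeister moves are applied correctly, and that the final diagram is a trivial unlink, all in polynomial time.

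The only delicate point is confirming that the NP--hardness reduction really transfers from the non--diagrammatic setting to the diagrammatic one, since a priori the diagrammatic unlinking number could be strictly larger than the unlinking number. This is not an obstacle here because the unclasping moves produced in the proof of Theorem \ref{thm_unlinking} are manifestly visible as crossing changes in the fixed diagram of $L_*^F$, so the upper bound of $n$ in the ``yes'' direction holds diagrammatically as well, and the lower bound argument via Lemma \ref{lem_unlinking} applies to any collection of $n$ unlinking moves, diagrammatic or not.
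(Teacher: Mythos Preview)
Your proposal is correct and follows essentially the same approach as the paper: both use the diagram of $L_*^F$ from Theorem~\ref{thm_unlinking}, observe that the unclasping moves produced there are visible as crossing changes in that specific diagram, and give an NP certificate consisting of the chosen crossings together with a polynomial unlink certificate via \cite{HassLagariasPippenger1, Lackenby2}. Your write-up is somewhat more explicit than the paper's about the two directions of the reduction and the lower bound of $n$, but there is no substantive difference in the argument.
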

\begin{proof} In this proof, for every 3SAT formula $F$ we need to construct a link diagram (rather than a link) that can be unlinked with $n$ unlinking moves if and only if $F$ is satisfiable. We will use the initial diagram of $L_*^F$ from above. With this, the fact that \textsc{diagrammatic unlinking number} is NP--hard follows from the proof of Theorem \ref{thm_unlinking}.
In the proof, a set of $n$ unlinking moves $C$ assumed to take $L_*^F$ to the unlink is modified to obtain a new set $C_*$ that also changes $L_*^F$ into an unlink.
In particular, all moves of $C_*$ consist of changing a crossing of a Whitehead double of an unknot.
All such moves are actually realized in the diagram, so there exists a set of $n$ crossing changes changing the diagram of $L_*^F$ into a diagram of the unlink if and only if the formula $F$ has a solution.
Thus, the restricted problem is NP--hard.

For proving that the problem is in NP, note that $L_*^F$ can be changed to a diagram of an unlink with $n$ crossing changes, though the diagram might not yet be the standard unlink diagram consisting of $n$ circles with no crossings.
Hence, we need to take a polynomial length certificate consisting of a choice of these $n$ crossings followed by one of the known polynomial length certificates for the unlink.
This proves the problem is NP--complete.
\end{proof}

\begin{rem}
  An alternative construction for the link $L^F_*$ is to take the link $L^F$ and replace only the Hopf link components with the Whitehead doubles.

\end{rem}

While we do not use this directly, we note that Theorem 1.2 from \cite{CowardLackenby2} shows that there is a unique way, up to a certain natural notion of equivalence, to unknot by a crossing change a Whitehead double of any knot but the figure-eight.

\section{Splitting and Unknotting numbers}
\label{sec_splittingnum}

There are two other natural problems related to \textsc{unlinking number}.
The first is the unknotting number problem, which is the restriction of the unlinking number problem to single component links.

\vspace{0.14in}
\textsc{unknotting number}: Given a knot diagram and an integer $n$, does the knot shown have unknotting number $n$?
\vspace{0.14in}

The next is the problem of calculating the splitting number of a link.
\begin{defn}
The \emph{splitting number} of a link $L$ is the minimal number of crossing changes required to make the link a split link, minimized over all diagrams.
\end{defn}
As with the unlinking number, the splitting number can also be formulated without reference to diagrams by using unlinking moves.
Specifically, it is the minimum number of unlinking moves needed to change a link into a split link.
Variations of the splitting number have also been analyzed in the literature (see, for example, \cite{Lackenby3} for details).
For example, while the splitting number only requires one separating 2-sphere in the link complement,  the \emph{total splitting number} asks for the minimum number of crossing changes needed to ensure that every component is split by a 2-sphere from the other components.

\vspace{0.14in}
\textsc{splitting number}: Given a link diagram and an integer $n$, does the link have splitting number $n$?
\vspace{0.14in}

The \textsc{unknotting number} and \textsc{splitting number} problems can also be rephrased as upper bound problems with ``number at most $n$''.
Although rephrasing in this way changes the problems, it does not affect the property of being NP--hard.
Indeed, $n$ calls to an oracle solving the exact value problem is sufficient to solve the upper bound problem, while two calls to an oracle for the upper bound problem will solve the exact value problem.
Since both the crossing number and splitting number are bounded by the crossing number and therefore the size of the input, both reductions are polynomial.

If one tries to extend the proof of Theorem \ref{thm_unlinking} to the unknotting number, the main difficulty is that an unknotting number is known only for few specific classes of knots.
Therefore, when constructing a new knot corresponding to a given 3--\textsc{SAT} instance, it is hard to show that there are no unexpected ways of unknotting it.
Nonetheless, we conjecture:

\begin{conj} \textsc{unknotting number} is NP--hard.
\end{conj}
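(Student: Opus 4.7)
The plan is to attempt an adaptation of the construction in the proof of Theorem \ref{thm_unlinking} by replacing the multi-component link $L_*^F$ with a single knot $K^F$. A first attempt: for each variable $x_i$, use a ``variable knot'' $V_i$ playing the role of the Whitehead-doubled Hopf link pair $W_i$, for instance a two-bridge knot with two distinguished unknotting crossings whose change corresponds to an assignment of TRUE or FALSE to $x_i$. These variable knots are then fused into one component by band sums along thin arcs placed outside the ``relation area'' in which the clause components of $L^F$ lived, and the clause components are themselves attached to the resulting single component by similar band sums along the boundary of that area. The intention is that the Brunnian iterated-commutator structure used in Theorem \ref{thm_unlinksublink} is preserved in the fundamental group of the complement of the variable part of the knot, so that, after performing an unknotting crossing change in each variable gadget, each unsatisfied clause component still represents a nontrivial element and hence obstructs unknotting.

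Given such a construction the upper bound is straightforward. From a satisfying assignment of $F$, perform the chosen unknotting crossing change in each $V_i$; then each clause component can be pulled off of the remaining diagram via Reidemeister II moves, as in the proof of Theorem \ref{thm_unlinksublink}. The resulting diagram is a band-connect-sum of unknots, which is itself an unknot, so the total number of crossing changes is $n$ plus a fixed number of ``setup'' changes absorbable into the polynomial reduction. The analogue of Lemma \ref{lem_unlinking} one would hope to establish asserts that any unknotting sequence of minimal length can be modified so that its $i$th move is a distinguished crossing change at $V_i$, in which case the combinatorial argument of Theorem \ref{thm_unlinking} would carry over and deliver a satisfying assignment from a short unknotting sequence.

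The main obstacle, explicitly flagged by the authors, is proving the matching lower bound. For unlinking number, nonzero pairwise linking numbers between the two halves of each $W_i$ provide an unavoidable local obstruction that forces one unlinking move per Hopf link pair; a single-component knot admits no such pairwise invariant. A promising route is to choose each gadget $V_i$ so that its contribution to a concordance invariant such as the Tristram--Levine signature, Rasmussen's $s$-invariant, or Ozsv\'ath--Szab\'o's $\tau$ is bounded away from zero, and to arrange the clause gadgets so that these invariants assemble across the whole diagram into a lower bound on $u(K^F)$ equal to $n$ plus one for each unsatisfied clause. The serious difficulty is that such invariants are known to be additive under connect sum but not under the band-sum construction needed to produce a single component, and a crossing change geographically distant from any given gadget could in principle shortcut several invariants simultaneously in an unforeseen way. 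Ruling out such nonlocal unknotting strategies is precisely the open part of the conjecture, and is likely to require either a new gadget whose unknotting number admits a local, combinatorially verifiable lower bound (perhaps using the $d$-invariants of the double branched cover, whose rank is controlled by $u(K)$), or a substantial extension of known additivity results for $4$-dimensional knot invariants under band-sums.
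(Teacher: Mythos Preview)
The statement you are addressing is a \emph{conjecture} in the paper, not a theorem: the authors give no proof, and explicitly say that ``when constructing a new knot corresponding to a given 3--\textsc{SAT} instance, it is hard to show that there are no unexpected ways of unknotting it.'' So there is no proof in the paper to compare against.

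Your proposal is not a proof either, and you are candid about this: you outline a plausible construction (band-sum the variable and clause gadgets into a single component), observe that the upper bound goes through essentially as in Theorem~\ref{thm_unlinking}, and then correctly isolate the genuine obstruction---establishing the matching lower bound on $u(K^F)$. Your diagnosis that the multi-component case relies on pairwise linking numbers to force one move per Hopf pair, and that no analogous local invariant is available once everything is fused into a single component, is exactly the point the paper makes more tersely. The suggestions involving signatures, $s$, $\tau$, or $d$-invariants are reasonable directions, but as you note, their behavior under band-sum (as opposed to connect sum) is not controlled well enough to rule out a single crossing change that collapses several gadgets at once.

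In short: your write-up is a fair summary of why the conjecture is open and what a proof would need, and it is consistent with the paper's own assessment. But it should be labeled as a discussion or proof \emph{sketch with an identified gap}, not as a proof proposal, since the central step---the analogue of Lemma~\ref{lem_unlinking} forcing each minimal unknotting sequence to localize to the variable gadgets---remains unproved and is the whole content of the conjecture.
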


We will use methods similar to the ones in Theorem \ref{thm_unlinking} to prove the following theorem.

\begin{thm}
\textsc{splitting number} is NP--hard.
\label{thm_splitting}
\end{thm}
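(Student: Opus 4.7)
The plan is to reduce 3--\textsc{SAT} to \textsc{splitting number} following the same architecture as Theorem \ref{thm_unlinking}, but with an enhanced construction. A direct reuse of $L_*^F$ cannot work: unclasping any single Whitehead-doubled variable component already produces a split unknot, so the splitting number of $L_*^F$ is at most one regardless of $F$. I therefore need to rigidify the link so that no small number of crossing changes produces a separating sphere unless the underlying formula is satisfiable.

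I would construct a link $\widetilde{L}^F$ by taking $L_*^F$ and adding one or more auxiliary components designed so that separating any sublink from the remainder demands one unclasping within each Whitehead-doubled Hopf pair. A natural candidate is an extra (non-Whitehead-doubled) component $P$ that is Brunnian-linked with every Hopf pair by an iterated-commutator word in the meridians of the variable components, built so that $P$ is null-homologous against each variable but whose word becomes trivial in the free quotient only after all $n$ pairs have been resolved by one unclasping apiece. Because $P$ is not itself Whitehead-doubled, it cannot be unclasped by a single crossing change: making $P$ split off requires collapsing the full Brunnian word, which forces one unclasping per pair. The clause components of $L_*^F$ are retained unchanged, still Brunnian-linked with their triples of literal components.

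The easy direction is essentially identical to the corresponding step of Theorem \ref{thm_unlinking}: if $F$ is satisfiable, the $n$ unclasping moves prescribed by a satisfying assignment simultaneously collapse $P$'s Brunnian word and every clause commutator, producing a genuine unlink, hence a split link. Conversely, suppose $\widetilde{L}^F$ is made split by at most $n$ crossing changes. Iteratively applying Lemma \ref{lem_unlinking}(c) to each of the $n$ Whitehead-doubled Hopf pairs, I may normalize the sequence into $n$ unclasping moves, one per pair, which determine a variable assignment. If some clause of $F$ were left unsatisfied by this assignment, the corresponding clause component would remain Brunnian-linked to its triple of variable components, and $P$ would remain linked with the affected variables, so no separating sphere could exist---a contradiction. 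Therefore $F$ is satisfiable.

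The main obstacle is designing the auxiliary component $P$ (and checking its interaction with the clause components) so that exactly the right thresholds hold: fewer than $n$ crossing changes never produce a separating sphere, while $n$ moves suffice precisely when the assignment satisfies $F$. This requires an explicit Brunnian diagram whose commutator structure tracks both the Hopf-pair obligations and the clause obligations, together with the usual polynomial size bounds, modeled on the crossing counts of Theorem \ref{thm_unlinksublink}, so that the reduction remains a Karp reduction.
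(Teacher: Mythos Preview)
Your instinct to rigidify by an auxiliary component is right, but the construction you sketch has a genuine gap, and the paper's fix is structurally different from what you propose.

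The central problem is that your component $P$, as described, collapses after \emph{any} choice of one unclasping per Hopf pair: its iterated-commutator word in the variable meridians trivialises regardless of which side of each pair is unclasped, hence regardless of whether the resulting assignment satisfies $F$. Once $P$ bounds a disk, a sphere around $P$ splits the link even if some clause component remains Brunnian-linked with its triple; your claim that ``$P$ would remain linked with the affected variables'' does not follow. So your $\widetilde L^F$ would have splitting number at most $n$ always. There is also a prior issue: in $L_*^F$ the clause components are themselves Whitehead doubled, so any one of them can be unclasped and split off in a single move, giving splitting number $1$ unless your $P$ also obstructs that, which it does not as stated. Finally, your appeal to Lemma~\ref{lem_unlinking}(c) to normalise to one unclasping per Hopf pair presupposes that each pair is touched by at least one of the $n$ moves; in the unlinking-number proof this was forced because a Hopf pair cannot be unlinked otherwise, but for mere splitting there is no such a priori constraint.

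The paper's construction sidesteps all of this by discarding the Hopf-pair scaffolding entirely. Variable components start out mutually unlinked; all clauses are merged into a \emph{single} component $P$ given by the product of the iterated commutators (controlled by Lemmas~\ref{lem_freeword} and~\ref{lem_clauselinkage}), and only the variable components are Whitehead doubled. An additional component $E'$ winding $n{+}1$ times around every variable component ensures that no variable component (nor $E'$ itself) can be split off in $n$ moves, so the only candidate for a splitting sphere isolates $P$. To recover the ``one of $x_i,\neg x_i$ for each $i$'' constraint that the Hopf pairs used to enforce, the paper pads the formula with dummy variables $x_{n+1},\dots,x_{2n}$ and tautological clauses $x_i\vee\neg x_i\vee x_{n+i}$, $x_i\vee\neg x_i\vee\neg x_{n+i}$; after normalising via Lemma~\ref{lem_unlinking} these force exactly one unclasping among $\{x_i,\neg x_i\}$ for each $i\le n$. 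Thus splitting $P$ in $n$ moves is equivalent to satisfying $F$.
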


 We start with two preliminary lemmas.

\begin{lemma}
Let $x_1, \cdots , x_n$ be generators for the free group $F_n$ and let $g$ be a nonempty product of nontrivial twice iterated commutators in $x_1, \cdots, x_n$, so $g = [x_{\alpha_1}, [ x_{\beta_1}, x_{\gamma_1}]][x_{\alpha_2}, [ x_{\beta_2}, x_{\gamma_2}]] \cdots  [x_{\alpha_m}, [ x_{\beta_m}, x_{\gamma_m}]]$, where $1 \le \alpha_i < \beta_i < \gamma_i \le n$ for all $i$.
Then $g$ is nontrivial in $F_n$.
\label{lem_freeword}
\end{lemma}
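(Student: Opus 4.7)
My plan is to detect the non-triviality of $g$ by pushing it into a ring where elements are easy to distinguish, namely through the Magnus expansion
\[
\mu\colon F_n \hookrightarrow \Z\langle\!\langle X_1,\ldots,X_n\rangle\!\rangle,\quad x_i\mapsto 1+X_i,\ x_i^{-1}\mapsto 1-X_i+X_i^{2}-\cdots.
\]
This $\mu$ is an injective homomorphism, so it is enough to exhibit one noncommutative monomial whose coefficient in $\mu(g)-1$ is nonzero.

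First I would record the standard fact that for a single twice-iterated commutator of generators,
\[
\mu\bigl([x_\alpha,[x_\beta,x_\gamma]]\bigr)=1+[X_\alpha,[X_\beta,X_\gamma]]+(\text{terms of degree}\ge 4),
\]
where on the right the bracket denotes the associative commutator $PQ-QP$ inside the tensor algebra. (This is routine: expand each $x^{\pm 1}$ to its geometric series and cancel, or quote the standard identification of $\gamma_3(F_n)/\gamma_4(F_n)$ with the degree-$3$ part of the free Lie algebra sitting inside the tensor algebra.) Since each factor $\mu(c_i)-1$ of $\mu(g)-1$ lies in total degree $\ge 3$, any cross-term in the expansion of the product $\mu(g)=\prod_i\mu(c_i)$ contributes only to degree $\ge 6$, so the degree-$3$ part of $\mu(g)-1$ is exactly
\[
S:=\sum_{i=1}^{m}[X_{\alpha_i},[X_{\beta_i},X_{\gamma_i}]].
\]
The whole problem thus collapses to proving $S\neq 0$ in degree $3$ of the tensor algebra.

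For that last step I would expand each summand as
\[
[X_\alpha,[X_\beta,X_\gamma]]=X_\alpha X_\beta X_\gamma-X_\alpha X_\gamma X_\beta-X_\beta X_\gamma X_\alpha+X_\gamma X_\beta X_\alpha,
\]
and zoom in on the strictly increasing monomial $M:=X_{\alpha_1}X_{\beta_1}X_{\gamma_1}$. A short case-check using $\alpha_j<\beta_j<\gamma_j$ confirms that among the four monomials arising from the $j$-th summand, only the first one has its indices in strictly increasing order; in each of the other three, the displayed permutation forces a larger index to appear before a smaller one. Therefore $M$ can only be produced by summands with $(\alpha_j,\beta_j,\gamma_j)=(\alpha_1,\beta_1,\gamma_1)$, and it is produced by each such summand with coefficient $+1$. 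The coefficient of $M$ in $S$ is thus at least $1$, so $S\neq 0$, hence $\mu(g)\neq 1$, hence $g\neq 1$.

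The main place I would expect to need a little care is the degree bookkeeping in the second paragraph, specifically confirming that the degree-$3$ piece of $\mu(g)-1$ is exactly $S$ with no hidden contribution from the product structure. Once that degree argument is nailed down, the combinatorial check distinguishing the four monomial patterns is immediate, and the conclusion $g\neq 1$ follows from injectivity of $\mu$.
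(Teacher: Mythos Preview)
Your argument via the Magnus embedding is correct. The degree bookkeeping is clean: each $\mu(c_i)-1$ has valuation $\ge 3$, so cross-terms in the product land in degree $\ge 6$ and the degree-$3$ part of $\mu(g)-1$ is indeed $S=\sum_i[X_{\alpha_i},[X_{\beta_i},X_{\gamma_i}]]$. Your observation that only the monomial $X_\alpha X_\beta X_\gamma$ among the four expanded terms has strictly increasing index pattern pins down the coefficient of $X_{\alpha_1}X_{\beta_1}X_{\gamma_1}$ in $S$ as the (positive) multiplicity of the triple $(\alpha_1,\beta_1,\gamma_1)$, so $S\neq 0$ and injectivity of $\mu$ finishes it.

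This is a genuinely different route from the paper's. The paper argues directly at the level of words: writing each factor out as
\[
x_{\alpha_i}x_{\beta_i}x_{\gamma_i}x_{\beta_i}^{-1}x_{\gamma_i}^{-1}x_{\alpha_i}^{-1}x_{\gamma_i}x_{\beta_i}x_{\gamma_i}^{-1}x_{\beta_i}^{-1},
\]
it notes that no cancellation occurs inside a factor (the letters alternate among three distinct generators), and that between adjacent factors at most two letters can cancel, since the tail $x_{\beta_i}x_{\gamma_i}^{-1}x_{\beta_i}^{-1}$ uses only two distinct generators while the head $x_{\alpha_{i+1}}x_{\beta_{i+1}}x_{\gamma_{i+1}}$ uses three; hence the interior of every factor survives free reduction. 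The paper's argument is entirely elementary and self-contained, requiring nothing beyond free reduction. Your approach imports a standard but heavier tool (the Magnus expansion and the identification of $\gamma_3/\gamma_4$ with degree-$3$ Lie elements), which in return gives a cleaner and more robust statement: it would adapt with no change to longer iterated commutators or to other products living deep in the lower central series, whereas the paper's letter-by-letter cancellation count is tuned to this specific word shape.
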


\begin{proof}
A word in the free group is trivial if and only if it can be reduced to the empty word by cancelling adjacent $x_\alpha x_\alpha^{-1}$ or $x_\alpha^{-1}x_\alpha$ pairs.
Within a given iterated commutator $$[x_{\alpha_i}, [ x_{\beta_i}, x_{\gamma_i}]]  = x_{\alpha_i}(x_{\beta_i}x_{\gamma_i}x_{\beta_i}^{-1}x_{\gamma_i}^{-1})x_{\alpha_i}^{-1}(x_{\gamma_i}x_{\beta_i}x_{\gamma_i}^{-1}x_{\beta_i}^{-1})$$ there are no such pairs since $1 \le \alpha_i < \beta_i < \gamma_i \le n$.
Between adjacent commutators, we can have at most two pairs of elements that cancel.
Indeed, each iterated commutator begins with the product $x_{\alpha_i}x_{\beta_i}x_{\gamma_i}$ of three distinct generators but ends with the expression $x_{\beta_i}x_{\gamma_i}^{-1}x_{\beta_i}^{-1}$ using only two distinct generators.
It follows that whatever is between those two expressions in each iterated commutator will never be cancelled in the product $g$ of iterated commutators.
Hence the word $g$ cannot be reduced to the empty word.
\end{proof}

\begin{lemma}
Let $F_n$ and $g$ be as in Lemma \ref{lem_freeword} and let $S$ be a subcollection of the collection of generators $x_1, \cdots, x_n$.
Let $N(S)$ be the smallest normal subgroup of $F_n$ containing $S$, and $G$ be the quotient $F_n / N(S)$.
Then the image of $g$ under the quotient map is trivial if and only if at least one element of each triple $x_{\alpha_i},x_{\beta_i},x_{\gamma_i}$ is contained in $S$.
\label{lem_clauselinkage}
\end{lemma}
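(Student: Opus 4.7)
The plan is to reduce the statement to the preceding Lemma \ref{lem_freeword} by identifying the quotient $G$ with a smaller free group.

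First I would observe that killing a free generator in a free group yields the free group on the remaining generators, so inductively the quotient $G = F_n/N(S)$ is the free group on the set $T = \{x_j : x_j \notin S\}$. Under the quotient map $q \colon F_n \to G$, the image $q(g)$ is obtained from the word $g$ by deleting all letters $x_j^{\pm 1}$ with $x_j \in S$. In particular, for each iterated commutator we have $q([x_{\alpha_i},[x_{\beta_i},x_{\gamma_i}]]) = [q(x_{\alpha_i}),[q(x_{\beta_i}),q(x_{\gamma_i})]]$, which is the identity whenever at least one of $x_{\alpha_i},x_{\beta_i},x_{\gamma_i}$ lies in $S$ (since $[a,[b,c]]=1$ whenever $a$, $b$, or $c$ is trivial).

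For the forward direction (``if''), this observation immediately gives that $q(g)$ is a product of identity elements and is therefore trivial, so $g \in N(S)$. For the converse (``only if''), I would argue contrapositively: assume there is some index $i_0$ such that none of $x_{\alpha_{i_0}}, x_{\beta_{i_0}}, x_{\gamma_{i_0}}$ lies in $S$. Then the corresponding factor $[q(x_{\alpha_{i_0}}),[q(x_{\beta_{i_0}}),q(x_{\gamma_{i_0}})]]$ is a nontrivial iterated commutator in $G$. Striking out the factors of $q(g)$ that vanish (i.e.\ those $i$ for which some $x_{\alpha_i},x_{\beta_i},x_{\gamma_i}$ is in $S$), we obtain $q(g)$ as a nonempty product of iterated commutators $[y_{\alpha'_j},[y_{\beta'_j},y_{\gamma'_j}]]$, each built from distinct elements of $T$ satisfying $\alpha'_j < \beta'_j < \gamma'_j$ (since the inequalities for the original indices survive when we restrict to the surviving generators and then relabel $T$ order-preservingly as $y_1,\ldots,y_{|T|}$). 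Applying Lemma \ref{lem_freeword} inside the free group on $T$ then yields that $q(g)$ is nontrivial, so $g \notin N(S)$.

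I expect the only subtle point to be verifying that the reduced product of surviving commutators still satisfies the hypothesis of Lemma \ref{lem_freeword}, namely that in each remaining triple the indices are strictly increasing. This is automatic because deleting generators from $\{x_1,\ldots,x_n\}$ and relabeling in order preserves the strict ordering within each original triple; no rearrangement is introduced. Everything else is a direct application of the universal property of the free group and of Lemma \ref{lem_freeword}.
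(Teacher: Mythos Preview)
Your proof is correct and follows essentially the same approach as the paper: both identify $G$ with the free group on the generators not in $S$, delete the iterated commutators that collapse under the quotient, and then invoke Lemma~\ref{lem_freeword} on the remaining nonempty product. Your extra remark about order-preserving relabeling of $T$ just makes explicit what the paper leaves implicit when it applies Lemma~\ref{lem_freeword} in the smaller free group.
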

\begin{proof}
Denote the image of $g$ under the quotient map by $\bar g$.
First suppose that at least one element from each triple $x_{\alpha_i},x_{\beta_i},x_{\gamma_i}$ is contained in $S$.
Then the image of each iterated commutator $[x_{\alpha_i}, [ x_{\beta_i}, x_{\gamma_i}]]$ is trivial in $G$, and so $\bar g$ is trivial in $G$ as the product of these commutators.

Now we prove that if some triple contains no element of $S$, then $g$ must be nontrivial.
For each $i$ such that one of $x_{\alpha_i},x_{\beta_i},x_{\gamma_i}$ is in $S$, the image of $[x_{\alpha_i}, [ x_{\beta_i}, x_{\gamma_i}]]$ is trivial in $G$, so we can remove $[x_{\alpha_i}, [ x_{\beta_i}, x_{\gamma_i}]]$  from $g$ without changing its image.
Due to this, we may assume that no triple of $g$ contains an element of $S$.
Next, observe that $G$ is a free group on $n - |S|$ generators.
Indeed, define a map from $F_n$ to $F_{n-|S|}$ by crossing out all appearances of elements of $S$ and their inverses.
It is simple to check that this is a surjective group homomorphism with kernel $N(S)$, and image $G$.
The generators of $G$ can be taken to be the set of $x_\alpha$ not contained in $S$.
The word $g$ is a sequence of iterated commutators in these generators, so we can apply Lemma \ref{lem_freeword} to see that the image of $g$ is nontrivial.
\end{proof}

We now begin the construction.

\begin{proof}[Proof of Theorem \ref{thm_splitting}]

Let $F$ be a 3-\textsc{SAT} formula with $n$ variables and $m$ clauses.
We construct a link $L^F$ that has splitting number $n$ if and only if $F$ has a solution.
First, add in additional variables $x_{n+1} \dots x_{2n}$ and clauses $x_i \vee \neg x_i \vee x_{n+i}$ and $x_i \vee \neg x_i \vee \neg x_{n+i}$ for $1 \le i \le n$.
These new clauses and variables do not change the satisfiability of the 3-\text{SAT} formula.
Indeed, all new clauses are satisfied by any TRUE/FALSE variable assignment.
The new formula $F'$ has $n' = 2n$ variables and $m' = m+2n$ clauses.

To construct the link, begin with a $2n'$-component unlink, and label the components $x_1, \cdots, x_{n'}, \neg x_1, \cdots , \neg x_{n'}$.
The complement of this unlink has fundamental group $F_{2n'}$, and Wirtinger generators $$x_1, \cdots, x_{n'}, \neg x_1, \cdots , \neg x_{n'}$$ each corresponding to a loop going once around the strand of the link component with the respective label.
Suppose $F'$ has clauses $c_i =  z_{\alpha_i} \vee z_{\beta_i} \vee z_{\gamma_i}$ for $1 \le i \le m'$ and $1 \le \alpha_i,\beta_i,\gamma_i \le n'$, where $z_{\alpha_i}$ is either $x_{\alpha_i}$ or $\neg x_{\alpha_i}$ appearing in $c_i$.
Then we add a single component $P$ corresponding to the word
$$[z_{\alpha_1}, [ z_{\beta_1}, z_{\gamma_1}]][z_{\alpha_2}, [ z_{\beta_2}, z_{\gamma_2}]] \cdots [z_{\alpha_{m'}}, [ z_{\beta_{m'}}, z_{\gamma_{m'}}]]$$
in the fundamental group.
Once the above product of the iterated commutators is written as a word in $x_\alpha^{\pm 1}$ and $(\neg x_\alpha)^{\pm 1}$, $P$ is drawn using the same method as in the proof of Theorem \ref{thm_unlinksublink} with respect to the link strands/components labeled by the generators.
Note however that the strands do not belong to Hopf sublinks anymore as in Theorem \ref{thm_unlinksublink}, but rather to unlinked components.
See Figure \ref{fig_clauselink2} for a simple example.

\begin{figure}[ht]
\centering
\includegraphics[height=3in]{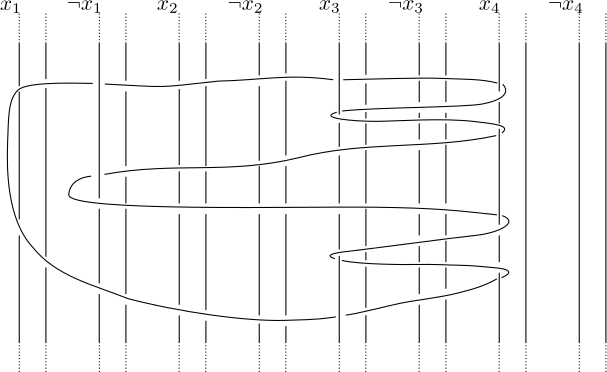}
\caption{A link representing the commutator $[\neg x_1,[x_3,x_4]]$, corresponding to the clause $ \neg x_1 \vee x_3 \vee x_4$.}
\label{fig_clauselink2}
\end{figure}

We claim $P$ is not split from the collection of other components.
In fact, suppose $S$ is some subset of $x_1, \cdots, x_{n'}, \neg x_1, \cdots , \neg x_{n'}$ and we delete the components with labels corresponding to elements of $S$ to get a link $L_S$.
Then by Lemma \ref{lem_clauselinkage}, $P$ is split from the other components in $L_S$ if and only if each triple $\alpha_i,\beta_i,\gamma_i$ contains some element of $S$.

The next step is to construct an intermediate link $L^d$ by replacing each of the components $x_1, ..., x_{n'}, \neg x_1, ..., \neg x_{n'}$ (but not the combined clause component $P$) with an untwisted Whitehead double.
We call these Whitehead doubles the \emph{variable components} of $L^d$, in contrast to the clause component $P$, and continue to refer to them by the same labels.
Note that as a Whitehead double, each variable component can be made into an unknot unlinked from the other components by a single crossing change (or unlinking move, in the diagram free terminology).
We again call this crossing change an \emph{unclasping move}.
Thus, the splitting number of $L^d$ is always one, although splitting the link in this way will in general not split $P$ from the other components.

We will show that in $L^d$, the component $P$ can be split from all other components in $n$ unlinking moves if and only if there is a solution for $F$. More specifically, we claim that if $P$ can be made split from the rest of $L^d$ by $n$ unlinking moves, then there is a set of $n$ unclasping moves for variable components that also splits $P$ from the rest of $L^d$, and that this set of $n$ unclasping moves corresponds to an assignment satisfying $F$. Here unclasping a component labelled $z_{\beta_k}$ means assigning TRUE to $z_{\beta_k}$ in the variable assignment for $F$. Note that such an assignment also corresponds to a partial assignment of variables of $F'$, but leaves $n$ of the variables of $F'$ unassigned. However, the construction of $F'$ ensures that if such an assignment satisfies all clauses of $F$, then any arbitrary choice of TRUE/FALSE for the last $n$ variables of $F'$ will satisfy all clauses of $F'$.  That is, every clause of $F'$ is assured to be true by the assignment given to one of the first $n$ variables.  It follows that in the group element $g$ corresponding to the component $P$, every iterated commutator will collapse to the identity in the group quotient determined by the partial variable assignment as per Lemmas \ref{lem_freeword} and \ref{lem_clauselinkage}.

Suppose $P$ can be split from the other components of $L^d$ in $n$ unlinking moves.
Denote this set of unlinking moves by $C$. We now apply Lemma \ref{lem_unlinking} to every variable component of $L^d$ such that there is a move in $C$ involving this component. The chosen component serves as $K$ in the lemma.
Then using Lemma \ref{lem_unlinking} (a) and (c) for such components we can replace any move in $C$ involving a variable component with an unlinking move unclasping this component. After all possible such replacements in $C$, the new set of unlinking moves $C'$ consists of some moves unclasping variable components of $L^d$ and some moves that only involve $P$. We will see that the latter moves cannot appear in $C'$.

Note that in $C'$, the moves involving variable components do not yet determine an assignment of variables of $F$ in the right way. In particular, a variable might be assigned both TRUE and FALSE if there is an unclasping move for both $x_i$ and for $\neg x_i$. A variable of $F$ may also be not assigned anything at all. In what follows, the extra clauses that we added to $F$ to obtain $F'$ will be used to determine a variable assignment for $F$ fully. Therefore our assignment for $F$ will not come from the current sequence $C'$. Rather, we will modify $C'$.

Suppose a link component that corresponds to $x_{n+i}$ or $\neg x_{n+i}$ is unclasped for $1 \le i \le n$ by an unlinking move $v$ in $C'$.
We might have one of two scenarios.

In the first scenario, there is no unclasping move for the link components labeled either $x_i$ or $\neg x_i$.  In this case $v$ from $C'$ can be replaced by either such unclasping move. Indeed, in Lemma \ref{lem_clauselinkage}, replacing $x_{n+i}$ or $\neg x_{n+i}$ with $x_i$ or $\neg x_i$ in the set $S$ still yields a trivial image of $g$, since at least one element of every triple $x_{\alpha_i},x_{\beta_i},x_{\gamma_i}$ is still contained in $S$ after the replacement. Therefore, such a replacement of $v$ in $C'$ yields the sequence that still splits $P$.

In the second scenario, there is already another unclasping move $u$ in $C'$ for either $x_i$ or $\neg x_i$. This means that either (1) we applied Lemma 4 to $x_i$ or $\lnot x_i$ when we constructed $C'$ from $C$, to obtain this unclasping move; or (2) the move $u$ was one of the original linking moves in $C$; or (3) we already replaced another unclasping move of $x_{n+i}$ or $\neg x_{n+i}$ in $C'$ with $u$ as in the first scenario.  In all cases, the move $c$ can be deleted from $C'$ without changing whether $F$ is satisfied.

Note that if we already have an unclasping move $u$ for $x_i$ or $\neg x_i$ in $C'$, then $u$ yields a part of a variable assignment. All clauses of $F'$ involving the component $x_{n+i}$ or $\neg x_{n+i}$ are already satisfied by assigning $x_i$ to either TRUE or FALSE, so the TRUE/FALSE assignment of $x_{n+i}$ corresponding to the unclasping move $c$ is redundant. Indeed, no clauses of the original formula $F$ involve $x_{n+i}$ or $\neg x_{n+i}$, and the added clauses $x_i \vee \neg x_i \vee x_{n+i}$ and $x_i \vee \neg x_i \vee \neg x_{n+i}$ will both still be satisfied after such a replacement or deletion.

After all modifications, denote the resulting sequence by $C''$. As explained above, $C''$ splits $P$ from the rest of $L^d$. At the end of this process, we can assume that all unclasping moves in $C''$ are of link components corresponding to $x_i$ or $\neg x_i$ for $1 \le i \le n$.
The clauses $x_i \vee \neg x_i \vee x_{n+i}$ then ensure that either $x_i$ or $\neg x_i$ is unclasped for all $1 \le i \le n$.
Since there are at most $n$ unclaspings, $x_i$ and $\neg x_i$ cannot both be unclasped for any $i$.
The choice of which variable components to unclasp therefore corresponds to a TRUE/FALSE assignment to $x_1 \dots x_n$, and these unclaspings will result in the clause component being split if and only each clause of $F$ is satisfied.

It follows that there must be precisely $n$ unclasping moves in $C''$.
Since $C''$ is a set of $n$ unlinking moves, all unlinking moves must therefore be unclasping moves, and there can be no moves involving $P$.
The set of unlinking moves $C''$ therefore corresponds to a variable assignment in $F$, and the clause component can be split from the other components in $n$ unlinking moves if and only if there is a solution for $F$.
Note again that the splitting number of $L^d$ is always 1, since unclasping one side of any variable produces a split link.

To obtain the final link $L_F$, we add one more component to $L^d$.
Consider the curve $E$ drawn in grey in Figure \ref{fig_hopfdoublechain}.
This curve has linking number 1 with every link component other than the combined clause component, and we can choose this curve to have no crossings with the combined clause component.
Thicken the curve $E$, and let $E'$ be a curve going around the boundary torus of the thickened curve $n+1$ times longitudinally and once meridianally.
Then $E'$ links with every non-clause component $n+1$ times.
\begin{figure}[h]
\centering
\includegraphics[height=1.3in]{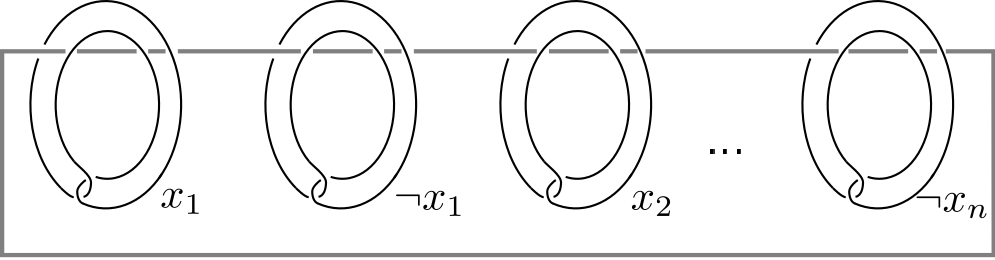}
\caption{A curve $E$ in thick grey linking every non-clause link component.}
\label{fig_hopfdoublechain}
\end{figure}

Let $L_F$ be the union of the variable components, the combined clause component, and $E'$.
The component $E'$ cannot be unlinked from any of the variable components in fewer than $n+1$ moves, and the
variable components cannot be unlinked from $E'$ in fewer than $n+1$ moves. So the only way to get a split link from $L_F$ in at most $n$ unlinking moves is to split the combined clause component from the remaining components.
As argued above for the link without $E'$, this is possible if and only if $F$ has a solution. This is just as true after adding the
component $E'$.
Thus, $L_F$ can be split in $n$ unlinking moves if and only if there is a solution to the formula $F$.

The size of the diagram of $L_F$ is polynomial in the size of the input for $F$.
Indeed, before adding $E'$ the number of crossings is less than the number of crossings involved in the construction for \textsc{unlinking number}, which was shown to be polynomial in the size of the input in Theorem \ref{thm_unlinking}.
$K$ has $n$ self-crossings, and $8n(n+1)$ crossings with other components, so adding the component $E'$ adds polynomially many more crossings.
Therefore, we have a polynomial reduction and the theorem follows.
\end{proof}

\begin{rem}
The method of combining all clause link components into a single component can also be applied to the \textsc{unlink as a sublink}, \textsc{unlinking number}, and \textsc{alternating sublink}, though we use different methods there.
While combining all components allows the number of components in the link corresponding to a given 3-SAT formula to be reduced, the proofs become more complicated.
\end{rem}

\section{Alternating Sublinks}
\label{sec_alternatingsublink}

We next consider another variation of the \textsc{sublink problem}.

\vspace{0.14in}

\textsc{alternating sublink}: Given a diagram of a link $L$ and a positive integer $k$, does $L$ have a $k$-component sublink that is alternating?

\vspace{0.14in}

Note that unlike the \textsc{sublink problem} as proposed by Lackenby and the special case \textsc{unlink as a sublink} analyzed in Theorem \ref{thm_unlinksublink}, the problem above asks not for a specific sublink, but a sublink with a given property.

\begin{thm}
\textsc{alternating sublink} is NP--hard.
\label{thm_altsub}
\end{thm}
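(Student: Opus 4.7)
The plan is to provide a Karp reduction from 3-\textsc{sat} to \textsc{alternating sublink}, building on the construction of $L^F$ from the proof of Theorem~\ref{thm_unlinksublink}. Given a formula $F$ with $n$ variables and $m$ clauses, the reduction produces a link diagram and the integer $k=n+m$ such that $F$ is satisfiable if and only if the link has an alternating $k$-component sublink. The intended sublinks will be the same as in the unlink-as-sublink proof: pick one literal component $x_\alpha$ or $\neg x_\alpha$ from each variable pair and keep all clause components. When the chosen literals satisfy $F$, each clause component detaches by Reidemeister~II moves as in Theorem~\ref{thm_unlinksublink}, so the sublink becomes an unlink, which is alternating.

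For the reverse direction, I need to ensure that no other $k$-component sublink is alternating and that intended sublinks for non-satisfying assignments are not alternating either. Any $k$-component sublink that is not of the intended form must contain both components of some variable pair, since omitting a clause component forces keeping $n+1$ variable components and hence both members of some pair. I would therefore modify the construction of $L^F$ in two ways. First, replace each variable-pair Hopf link with a two-component link $H$ whose components are individually unknotted but which is itself non-alternating; this can be arranged by inserting additional crossings within the dashed box of Figure~\ref{fig_hopf} while preserving the arrangement of labelled strands used to draw the clause components. Second, replace each clause component by a satellite of the commutator loop of Theorem~\ref{thm_unlinksublink} whose pattern is non-alternating when its companion solid torus is knotted, for example a $(3,q)$-cable pattern with $q\geq 4$. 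The satisfied-clause argument from Theorem~\ref{thm_unlinksublink} then shows that, when one of the three linked literal components is removed, the companion commutator curve becomes an unknotted solid torus and the satellite simplifies to an alternating piece.

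With both modifications in place, any $k$-sublink which fails to be of the intended form contains the non-alternating two-component sublink $H$, and any intended sublink for a non-satisfying assignment contains a non-alternating satellite clause component linked to its three literal components in a non-trivial way. Either way, the sublink contains a non-alternating sub-sublink, so by the standard fact that a sublink of an alternating link is alternating, the $k$-sublink itself is non-alternating. The forward and reverse directions together give the claimed equivalence. The modifications add only a bounded number of crossings per component, so the diagram of $L^F$ still has at most polynomially many crossings in $|F|$, as in Theorem~\ref{thm_unlinksublink}, and the reduction is polynomial-time.

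The main obstacle I expect is pinning down the satellite or tangle gadget used in the second modification: it must provably simplify to an alternating piece precisely when the Reidemeister~II cancellations trivialising the commutator are available, yet be provably non-alternating whenever they are not. Naive Brunnian sublinks like the Borromean rings are themselves alternating, so the bare commutator from Theorem~\ref{thm_unlinksublink} is insufficient and some genuinely non-trivial satellite or tangle insertion is required. All other ingredients --- the crossing-count bound, the correspondence between satisfying assignments and intended unlink sublinks, and the case analysis above --- follow essentially as in the proofs of Theorems~\ref{thm_unlinksublink} and~\ref{thm_unlinking}.
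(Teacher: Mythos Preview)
Your proposal has a genuine gap in the second modification, and the paper's proof takes a different and cleaner route that avoids it.

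The paper does not modify $L^F$ by ad hoc gadgets. Instead it reuses the link $L^F_*$ from the \textsc{unlinking number} proof, obtained by replacing \emph{every} component of $L^F$ with its untwisted Whitehead double. The key lemma (Lemma~\ref{lem_whitehead}) shows, via Menasco's results on alternating links and incompressible tori, that the Whitehead double of any non-unlink is non-alternating. Hence a $k$-component sublink $L_s'$ of $L^F_*$ is alternating if and only if the corresponding sublink $L_s$ of $L^F$ is an unlink, which by Theorem~\ref{thm_unlinksublink} happens if and only if it encodes a satisfying assignment. This single uniform device handles both of the cases you try to treat separately.

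Your second modification does not work as stated. The commutator curve in $L^F$ is an \emph{unknot in $S^3$} regardless of whether the three literal components are present; what changes is the isotopy class of the multi-component link, not the knot type of the clause curve. Consequently, any satellite of the commutator curve with a fixed pattern --- in particular your $(3,q)$-cable --- has a fixed knot type in $S^3$, and hence a fixed alternating/non-alternating status, independent of the assignment. Your criterion ``non-alternating when its companion solid torus is knotted'' is never triggered, because the companion torus is always unknotted. To make a satellite argument work you would need the companion torus to be incompressible in the complement of the remaining components and then invoke a Menasco-type obstruction for the full sublink; but at that point you are essentially rediscovering the paper's approach, and the uniform Whitehead doubling of all components (together with Lemma~\ref{lem_whitehead}) is the clean way to carry it out. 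Your first modification and the ``sublink of alternating is alternating'' observation are fine, but without a correct replacement for the clause-component gadget the reverse direction of your reduction fails.
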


We need the following lemma:

\begin{lemma}
Let $L_s$ be a link that is not the unlink, and $L_s'$ the result of replacing every component of $L_s$ with a Whitehead double.
Then $L_s'$ is not alternating.
\label{lem_whitehead}
\end{lemma}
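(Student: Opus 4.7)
The plan is to argue by contradiction via Menasco's hyperbolization theorem for alternating links. The key geometric content is that Whitehead-doubling each component of a non-trivial link produces essential tori in the exterior, obstructing hyperbolicity, while Menasco's theorem forces alternating non-split prime links to be hyperbolic unless they are $(2,n)$-torus links. So once we exhibit such an essential torus and rule out the torus-link case, we are done.

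First I would reduce to the case $L_s$ is non-split. If $L_s = L_1 \sqcup L_2$ is split, then so is $L_s' = L_1' \sqcup L_2'$. A split link is alternating if and only if each split summand is alternating, and since $L_s$ is not the unlink at least one $L_i$ is not the unlink; inducting on the number of components then reduces to the non-split case.

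Assuming $L_s$ is non-split, for each component $K_i$ of $L_s$ let $V_i$ be a disjoint regular neighborhood and $T_i = \partial V_i$. The submanifold $V_i \setminus N(\mathrm{Wh}(K_i))$ is homeomorphic to the Whitehead link exterior $E_W$, which is hyperbolic, so $T_i$ is incompressible from that side. On the outside, a compressing disk for $T_i$ in the exterior of $L_s$ would by a homology argument have a longitude of $K_i$ as its boundary; capping off with an annulus in $V_i$ would produce a disk for $K_i$ disjoint from $L_s \setminus K_i$, and thickening this disk would give a $2$-sphere splitting $K_i$ from the rest, contradicting non-splitness of $L_s$ (or non-triviality in the one-component case). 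Moreover $T_i$ is not boundary-parallel in the exterior of $L_s'$, since $E_W$ is not a product. A similar isotopy argument using incompressibility of $T_i$ in $E_W$ shows that $L_s'$ is itself non-split and prime. Hence the exterior of $L_s'$ is non-split, prime, and toroidal.

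Menasco's theorem then forces $L_s'$ into the narrow class of $(2,n)$-torus links. This is ruled out componentwise: each component of $L_s'$ is an untwisted Whitehead double, hence either an unknot or a knot with Alexander polynomial $1$, while every non-trivial torus knot has non-trivial Alexander polynomial; for $n$ even, $(2,n)$-torus links have two unknot components with nonzero linking number, whereas distinct untwisted Whitehead doubles have linking number $0$; and for $n = 0$ the link is the unlink, excluded by the presence of the essential torus. All cases yield a contradiction. I expect the main obstacle to be the careful reductions to the non-split and prime cases for $L_s'$ itself (as distinct from $L_s$), which require isotoping the relevant $2$-spheres out of the solid tori $V_i$ using incompressibility of $T_i$ in $E_W$; the essential-torus construction and the torus-link exclusion are comparatively routine.
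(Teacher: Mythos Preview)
Your argument is correct and shares the paper's core strategy: exhibit the companion torus of a Whitehead double as an essential torus, then invoke Menasco's results on alternating links to derive a contradiction. The execution differs in two places worth noting.

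First, the paper appeals directly to Menasco's atoroidality theorem for non-split prime alternating links (any closed incompressible surface contains a meridian, hence an incompressible torus is peripheral). This immediately contradicts the existence of the satellite torus, with no need to pass through the hyperbolic/torus-link dichotomy. Your route via ``hyperbolic or $(2,n)$-torus link'' is valid, but the subsequent case analysis (Alexander polynomial for odd $n$, linking number for even $n$) is extra work that the atoroidality statement absorbs for free, since $(2,n)$-torus link complements are themselves atoroidal.

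Second, for primeness the paper gives a direct ball--arc argument: a $2$-sphere $\Sigma$ meeting the link in two points is analyzed against the companion torus $T$, essential intersections with $T$ are ruled out by counting (a meridional curve forces too many intersection points with the Whitehead double), and once $\Sigma$ lies inside the companion solid torus a straightening homeomorphism shows the arc is unknotted. Your ``similar isotopy argument using incompressibility of $T_i$ in $E_W$'' is the standard innermost-disk cleanup and works, but you correctly flag it as the step needing the most care; the paper's version makes the geometry more explicit. Either packaging is fine.
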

\begin{proof}
The link $L_s$ is split if and only if $L_s'$ is split.
A splitting sphere for $L_s$ also splits $L_s'$, while a splitting sphere for $L_s'$ can be isotoped to obtain a splitting sphere disjoint from all satellite tori, and is therefore a splitting sphere for $L_s$.

We will assume $L_s'$ is alternating and arrive at a contradiction.
Consider a reduced alternating diagram $D$ of $L_s'$.
If $L_s'$ is split, $L_s'$ can be seen to be split in $D$ as was proved by Menasco \cite{Menasco1}.
Moreover, every connected piece of $D$ is alternating, and at least one of such pieces, say $P$, is not an unknot, since $L_s'$ is not an unlink.
The piece $P$ is alternating and has a connected diagram by assumption, and hence $P$ is non-split as a link in $S^3$.
We claim that $P$ is then prime as a link in $S^3$.
Menasco's results \cite{Menasco1} imply that a prime link complement cannot contain an incompressible non-boundary parallel torus, but there is a satellite torus in the complement $S^3-P$, a contradiction.

Let us now prove the claim that $P$ is prime.
Suppose $\Sigma$ is an embedded 2-sphere intersecting $P$ in two points.
Let $K'$ be the link component of $P$ that $\Sigma$ intersects, and let $T$ be the companion torus for $K'$.
Let $K$ be the component of $L_s$ of which $K'$ is a Whitehead double, so $T$ can be thought of as the boundary of a regular neighborhood of $K$.
See Figure \ref{fig_lemmapica}.

\begin{figure}[ht]
\centering
    \begin{subfigure}[b]{0.5\textwidth}
        \centering
        \includegraphics[height=1.9in]{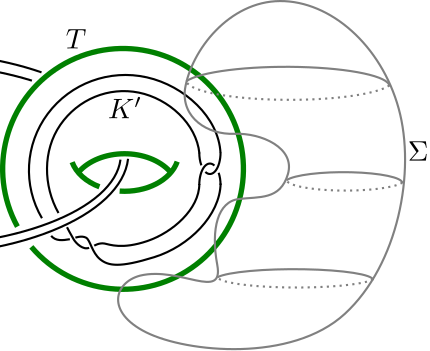}
        \caption{}
        \label{fig_lemmapica}
    \end{subfigure}%
~~~~~
    \begin{subfigure}[b]{0.4\textwidth}
        \centering
        \raisebox{0.17\height}{\includegraphics[height=1.5in]{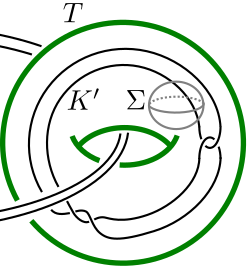}}
        \caption{}
        \label{fig_lemmapicb}
    \end{subfigure}
\caption{The link component $K'$ (black) in it's companion torus $T$ (thick green), and the sphere $\Sigma$ (grey) intersecting $K'$ in two points.
Fragments of other components of $L$ are also depicted in black.
}
\label{fig_lemmapic}

\end{figure}

If the 2-sphere $\Sigma$ intersects $T$ in an essential meridianal curve, there must be at least two such curves of intersection.
Therefore, the number of intersections of $\Sigma$ and the component $K'$ is at least 4, contradicting the definition of $\Sigma$.
If $\Sigma$ intersects $T$ in an inessential curve, we can isotope $\Sigma$ to remove this intersection, since $P$ is not split.
Since $\Sigma$ intersects $K'$ by assumption, $\Sigma$ lies entirely on the same side of $T$ as $K'$, and any other components of $P$ lie on the other side of $T$, since $T$ is a companion torus for $K$, as shown in Figure \ref{fig_lemmapicb}.
Therefore, the connected component of the manifold $S^3 - \Sigma$ that does not contain $T$ intersects $K'$ and no other components of $P$.
We claim this intersection with $K'$ is an unknotted arc.

Let $V$ be the solid torus bounded by $T$ and containing $K'$.
Consider a homeomorphism $h$ of $V$ such that the resulting solid torus $h(V)$ is embedded and unknotted in $S^3$, and $K'$ inside $V'$ is an untwisted whitehead double of an unknot.
The map $h$ can be visualized, for example, as cutting $V$ along a meridianal disk, unknotting and untwisting it if necessary, and regluing along the same disk again.
Then there is a 2-sphere $h(\Sigma)$ inside $h(V)$ intersecting $h(K')$ in two points.
The sphere $h(\Sigma)$ splits $S^3$ into two 3-balls.
Since $h(K')$ is the Whitehead double of an unknot, and is therefore unknotted itself, it must intersect both 3-balls in unknotted arcs.
On the other hand, $h$ did not change the interior of $V$, and therefore pulling back by $h$ shows that $\Sigma$ bounds an unknotted arc in $V$ as well.
We have shown that any 2-sphere intersecting $\Sigma$ in two points bounds an unknotted arc on one side, and hence $P$ is prime.
\end{proof}

\begin{proof}[Proof of Theorem \ref{thm_altsub}]
Given a 3-SAT formula $F$ with $n$ variables and $m$ components, we use the same construction for a link $L^F_*$ as for \textsc{unlinking number} in Theorem \ref{thm_unlinking}.
Recall that each component of the link $L^F_*$ is the Whitehead double of a component of the respective link $L^F$ used for \textsc{unlink as a sublink} in the proof of Theorem \ref{thm_unlinksublink}.
Let $L_s$ be an $(n+m)$-component sublink of $L^F$, before replacing all components with their Whitehead doubles, and $L_s'$ be the respective sublink of $L^F_*$ after replacing the components with Whitehead doubles.

We proved in Theorems \ref{thm_unlinksublink} and \ref{thm_unlinking} that each of the sublinks $L_s$ and $L_s'$ is the unlink if and only if variable components of the sublink correspond to FALSE values in a solution of the 3--\textsc{SAT} problem $F$.
If $L_s'$ does not correspond to a solution of $F$, the sublink $L_s$ is not an unlink, and hence $L_s'$ cannot be alternating by Lemma \ref{lem_whitehead}.
On the other hand, if $L_s'$ is an unlink, it is trivially alternating.
Therefore, $L_s'$ is alternating if and only if it corresponds to a solution of $F$.
This Karp reduction is the same as the one used for \textsc{unlinking number}, so it is polynomial in the size of input for $F$.
It follows that \textsc{alternating sublink} is NP--hard.
\end{proof}

\section{Bound for Reidemeister moves for an arbitrary link}
\label{sec_reidbound}

We next consider the problem of bounding the number of Reidemeister moves needed to pass from one diagram of a link to another.

\vspace{0.14in}
\textsc{bound for reidemeister moves for an arbitrary link}: Given two link diagrams and an integer $k$, are the link diagrams related by a sequence of at most $k$ Reidemeister moves?
\vspace{0.14in}

 We will use the following restricted problem to prove NP-hardness of \textsc{bound for reidemeister moves for an arbitrary link}.

\vspace{0.14in}
\textsc{bound for reidemeister moves for an unlink/unknot}: Given two diagrams of an unlink/unknot and an integer $k$, are the diagrams related by a sequence of at most $k$ Reidemeister moves?
\vspace{0.14in}

Another variation of the problem is where only one diagram is assumed to be the unlink/unknot diagram and the knot type of the other diagram is unknown.
We will call this \textsc{bound for reidemeister moves proving homeomorphism to an unlink/unknot}.
Note that we do not assume that the unlink/unknot diagram is trivial anywhere above, i.e.
that it consists of unknotted disjoint circles.
If one of the diagrams is replaced by a trivial unlink/unknot diagram in \textsc{bound for reidemeister moves for an unlink/unknot}, we will call the problem \textsc{bound for reidemeister moves leading to a trivial unlink/unknot diagram}.

\subsection{Reidemeister moves problems and upper bounds on complexity.}

The problem \textsc{bound for reidemeister moves for an arbitrary link} might not be in NP in general.
The obvious certificate consisting of a sequence of $k$ diagrams and Reidemeister moves connecting them has size at least linear in $k$.
But the size of the input is $O(n_1+n_2+\log k)$, where $n_1$ and $n_2$ are the number of crossings in the two given link diagrams, and $\log k$ is the number of bits necessary to input the integer $k$.
Hence the length of the shortest sequence of moves connecting two diagrams might be exponential in the size of the input.

However, in special cases bounds are known for the number of Reidemeister moves needed to reduce a diagram, thus reducing the dependence of the complexity on $k$.
The first bounds were found by Hass and Lagarias \cite{HassLagarias1} for the number of Reidemeister moves needed to reduce a diagram of an unknot to the trivial unknot/unlink diagram.
Later Hayashi \cite{Hayashi1} found a bound for the number of Reidemeister moves needed to get a split diagram of a split link.
More recently, bounds have been found that are polynomial in the number of crossings.
Hass, Lagarias, and Pippenger in \cite{HassLagariasPippenger1} and Lackenby in \cite{Lackenby2} provide polynomial bounds on the number of Reidemeister moves needed to change any diagram of an unknot to its trivial diagram, and to split a diagram of a split link respectively.

These results quickly lead to a polynomial bound on the number of Reidemeister moves required to change a diagram of an $n$ component unlink to the diagram with no crossings.
Such a sequence consists of $n-1$ sequences of moves splitting the components from each other in the diagram, followed by $n$ sequences of moves each changing one of the components into the 0--crossing diagram of the unknot.
So the sequence is made up of $2n-1$ smaller sequences, and each is polynomial in the input by \cite{HassLagariasPippenger1,Lackenby2}.
Therefore, the total number of Reidemeister moves needed is polynomial as well.

Given two nontrivial diagrams of the unlink, there is a polynomial bound on the number of Reidemeister moves needed to obtain one from another consisting of a sequence changing the first diagram to the 0--crossing diagram followed by a sequence changing the 0--crossing diagram to the second diagram.
If $k$ is smaller than this bound, then there is a certificate polynomial in the input, and if $k$ is larger, then \cite{HassLagariasPippenger1,Lackenby2} prove the existence of such a sequence.
It follows that \textsc{bound for reidemeister moves for an unlink/unknot} is in NP.
The same certificates show that \textsc{bound for reidemeister moves proving homeomorphism to an unlink/unknot} and \textsc{bound for reidemeister moves leading to a standard unlink/unknot diagram} are in NP as well.

Additionally, \cite{Lackenby3} refers to a paper in progress \cite{Lackenby4} showing that for any given link type $L$ there is a polynomial $p_K$ such that any
two diagrams of $L$ with crossing numbers $n$ and $n'$ differ by a sequence of at most $p_K(n) + p_K(n')$ Reidemeister moves.
This suggests that the upper bound problem for Reidemeister move equivalence is in NP whenever it is restricted to a fixed link type $L$.
Note however that a polynomial independent of the link type would be needed for this to imply that \textsc{bound for reidemeister moves for an arbitrary link} is in NP.
In fact, such a result would imply that link equivalence is in NP.

\subsection{Reidemeister moves problems and lower bounds on complexity.}

We establish the first lower complexity bound for Reidemeister moves problems, showing that \textsc{bound for reidemeister moves for an unlink} is NP--hard.
Since this problem is in NP, we have:

\begin{thm}
\textsc{bound for reidemeister moves for an unlink} is NP--complete.
\label{thm_reid}
\end{thm}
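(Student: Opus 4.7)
The plan is to reduce from \textsc{diagramatic unlinking number}, which was shown NP--complete in the previous section, or equivalently from 3--\textsc{SAT} through the link $L_*^F$ associated with a formula $F$. The main technical difficulty is that Reidemeister moves preserve link type, so in our reduction both diagrams must genuinely represent an unlink; yet the earlier constructions produce links that become unlinks only after crossing changes chosen by a satisfying assignment. The core idea is therefore to convert crossing changes into bounded-length sequences of Reidemeister moves by installing local ``helper'' gadgets that realize unclasping via ambient isotopy.

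More concretely, I would start from the diagram of $L_*^F$ produced in the proof of Theorem \ref{thm_unlinking}. At each of the $n$ candidate unclasping locations in the Whitehead-doubled variable components I would attach a small unknotted helper component threaded through the clasp, arranged so that the associated unclasping move is realized by sliding a strand across the helper loop using a constant number of Reidemeister moves. Call the resulting link $M^F$; by design $M^F$ is an unlink. Let $D_1^F$ be the diagram of $M^F$ obtained by augmenting the standard diagram of $L_*^F$ with these helpers, and let $D_2^F$ be the trivial diagram of an unlink with the same number of components. Choose the threshold $k=k(|F|)$ to equal the cost of (i) performing the $n$ helper-mediated unclaspings dictated by a satisfying assignment, followed by (ii) applying the polynomial bounds of \cite{HassLagariasPippenger1, Lackenby2} to separate components and reduce each to the trivial diagram.

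The forward direction is straightforward: given a satisfying assignment, carry out the prescribed $n$ helper-mediated unclaspings in the order provided by Lemma \ref{lem_unlinking}, and then simplify the resulting split unlink, all within $k$ Reidemeister moves. The reverse direction is the principal obstacle: one must argue that if $D_1^F$ and $D_2^F$ are connected by at most $k$ Reidemeister moves, then $F$ is satisfiable. My plan is to construct an invariant, tailored to the helper gadgets, that is unaffected by generic Reidemeister moves but records the net effect of gadget activations along a move sequence; combined with the Brunnian commutator argument underpinning the proofs of Theorems \ref{thm_unlinksublink} and \ref{thm_unlinking} and with Lemma \ref{lem_clauselinkage}, this would force any sufficiently short sequence to encode a selection of at most $n$ effective unclaspings whose associated partial assignment satisfies every clause. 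The delicate part is ruling out global shortcuts: one must ensure that no Reidemeister sequence of length at most $k$ bypasses the gadget mechanism, for instance by rerouting strands around the helpers in unexpected ways. Once this lower bound is in place, NP--hardness follows, and combined with the NP--membership argument from Subsection 5.1 this yields NP--completeness.
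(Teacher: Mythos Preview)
Your proposal has a genuine gap in the reverse direction that is not merely ``delicate'' but, as stated, unworkable. You build $M^F$ so that it is an unlink \emph{regardless} of whether $F$ is satisfiable, and you take $D_2^F$ to be the trivial diagram. But then the polynomial bounds of \cite{HassLagariasPippenger1,Lackenby2} guarantee a Reidemeister sequence of length at most some fixed polynomial $p(c(D_1^F))$ for \emph{every} instance. Your threshold $k$ is defined using exactly these bounds, so unsatisfiable instances are not separated from satisfiable ones: both admit sequences of length at most $k$. To salvage the reduction you would need a \emph{lower} bound on the shortest Reidemeister sequence for the unsatisfiable case that exceeds the upper bound for the satisfiable case. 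No such lower bounds are available for nontrivial unlink diagrams, and the hoped-for ``invariant tailored to the helper gadgets'' is not specified; Reidemeister moves are local and it is unclear what quantity would be monotone under them yet sensitive to which clasps were traversed.

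The paper avoids this difficulty entirely by reducing from \textsc{planar hamiltonian path} rather than from 3--\textsc{SAT}. Given a planar graph $\Gamma$ with $n$ vertices and $m$ edges, it builds two unlink diagrams $D_1,D_2$ whose crossing numbers differ by exactly $4(m-n+1)$, and sets the threshold to $k=2(m-n+1)$. Since a type~II move changes the crossing number by $2$ and types~I and~III change it by at most $1$ and $0$ respectively, any sequence of length $k$ reaching $D_2$ must consist solely of crossing-reducing type~II moves. Each such move detaches an edge component from a vertex component, so the residual chain in $D_2$ is forced to encode a Hamiltonian path. The point is that the lower bound on sequence length comes for free from the crossing-number parity/count, not from any subtle invariant. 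Your approach lacks an analogue of this tight counting argument.
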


Since this is a subproblem of the more general case, it immediately implies:
\begin{corr}
\textsc{bound for reidemeister moves for an arbitrary link} is NP--hard.
\end{corr}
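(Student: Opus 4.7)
The plan is to reduce from 3-SAT, adapting the construction from the proof of Theorem \ref{thm_unlinking} (equivalently, reducing from \textsc{diagramatic unlinking number}). Given a 3-SAT formula $F$ with $n$ variables and $m$ clauses, I would build two diagrams $D_1$ and $D_2$ of an unlink, of size polynomial in $|F|$, together with a polynomial integer $k$, such that $D_1$ and $D_2$ are related by at most $k$ Reidemeister moves if and only if $F$ is satisfiable. Since Section 5.1 already establishes membership in NP, this would give NP-completeness.

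The construction would start from the diagram of $L_*^F$ from Theorem \ref{thm_unlinking}, which has the property that it becomes an unlink after $n$ unclasping crossing changes precisely when $F$ is satisfiable. To encode the unclasping operations as Reidemeister moves within diagrams of the unlink, I would insert a small local tangle ``gadget'' at each of the $2n$ candidate unclasping crossings. The gadget should be designed so that (i) the overall modified diagram $D_1$ represents an unlink regardless of $F$, and (ii) each gadget can be resolved by $O(1)$ Reidemeister moves in two distinguishable ways, with the two resolutions corresponding to the two signs of the original clasp crossing. I would then take $D_2$ to be the standard crossingless diagram of the unlink on the appropriate number of components, and set $k$ equal to the total cost of resolving $n$ gadgets in the unclasping direction, plus the polynomial upper bound from \cite{HassLagariasPippenger1, Lackenby2} for Reidemeister-simplifying an arbitrary unlink diagram.

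Soundness should be routine: if $F$ is satisfiable, one resolves the $n$ gadgets according to a satisfying assignment, then invokes the polynomial bound of \cite{HassLagariasPippenger1, Lackenby2} to reduce the remainder of $D_1$ to $D_2$ in polynomially many moves. For the completeness direction, given a Reidemeister sequence of length at most $k$ from $D_1$ to $D_2$, the idea is to first normalize it -- in the spirit of Lemma \ref{lem_unlinking} -- so that each gadget is either resolved cleanly in a single burst of local moves or left untouched. Reading off the resolutions yields a partial variable assignment, and since the terminal diagram $D_2$ has no crossings, all Brunnian clause components must have been disassembled, which forces the assignment to satisfy every clause exactly as in Theorems \ref{thm_unlinksublink} and \ref{thm_unlinking}.

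The main obstacle is the gadget design together with the completeness analysis. The gadget must simultaneously guarantee that $D_1$ genuinely represents an unlink and force any ``off-plan'' sequence of Reidemeister moves -- one that does not resolve gadgets cleanly according to some assignment -- to blow up in length beyond $k$. Establishing this local-to-global rigidity is the crux of the proof, and will likely require tracking an invariant on intermediate diagrams, for instance a Wirtinger-style word read off neighborhoods of the clause components, that records how each gadget has been resolved and rules out polynomial shortcuts that bypass the gadgets entirely.
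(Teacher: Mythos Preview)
Your proposal attempts a direct reduction from 3-SAT, but there is a genuine gap at its core. You require $D_1$ to be an unlink diagram regardless of $F$, and you take $D_2$ to be the crossingless unlink. But then the very polynomial bounds from \cite{HassLagariasPippenger1,Lackenby2} that you invoke guarantee a Reidemeister sequence from $D_1$ to $D_2$ of length at most $P(c(D_1))$ for a \emph{fixed} polynomial $P$, depending only on the crossing number of $D_1$ and not at all on whether $F$ is satisfiable. Since you set $k$ to include just such a polynomial simplification bound, the instance $(D_1,D_2,k)$ is a YES instance for every $F$, and the reduction collapses. More fundamentally, a gadget that ``resolves by Reidemeister moves in two ways corresponding to the two signs of a crossing'' cannot exist: Reidemeister moves preserve the link type, so a tangle cannot be Reidemeister-equivalent to both a positive and a negative crossing. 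Your own ``main obstacle'' paragraph is essentially asking for a lower bound on Reidemeister distance for specific unlink diagrams, which is precisely the hard part and is not supplied.

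The paper takes a completely different route. The corollary is immediate from Theorem~\ref{thm_reid}, since \textsc{bound for reidemeister moves for an unlink} is a subproblem of the arbitrary-link version. Theorem~\ref{thm_reid} itself is proved not from 3-SAT but from (a planar variant of) \textsc{hamiltonian path}: from a planar graph $\Gamma$ with $n$ vertices and $m$ edges one builds two unlink diagrams $D_1,D_2$ and sets $k=2(m-n+1)$, which equals exactly $\tfrac{1}{2}\bigl(c(D_1)-c(D_2)\bigr)$. Because each Reidemeister move changes the crossing number by at most $2$, any length-$k$ sequence must consist solely of crossing-reducing type~II moves; reading off which edge components get disconnected then recovers a Hamiltonian path. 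The point is that $k$ is chosen to saturate a trivial crossing-number lower bound, so no appeal to (and no fight against) the polynomial upper bounds is needed.
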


\textsc{bound for reidemeister moves proving homeomorphism to an unlink/unknot} is NP--hard as it is a harder problem than \textsc{bound for reidemeister moves for an unlink}, as strictly less information is given.
It is also in NP as discussed earlier, and is therefore NP--complete as well.
It is not yet known whether \textsc{bound for reidemeister moves leading to a standard unlink/unknot diagram} is NP--hard.

As \textsc{bound for reidemeister moves for an unknot} is a restriction of the similar problem for an unlink, it is possibly easier for an unknot than an unlink.
Therefore, it is not obvious whether \textsc{Bound for Reidemeister Moves for an Unknot} is NP-hard, as it is unclear how to reduce the argument of Theorem 15 to single component links.

\begin{question}
Is \textsc{bound for reidemeister moves for an unknot} NP--hard?
\end{question}

Soon after the first version of the preprint of this paper appeared online, the question was resolved positively in \cite{MRST}.

We conclude with the proof of Theorem \ref{thm_reid}.

\begin{proof}[Proof of Theorem \ref{thm_reid}]
We will use the \textsc{planar hamiltonian path} problem with a small modification.
Specifically, we assume that the graph $\Gamma$ that we construct has at least two vertices of degree 1.
Since the authors did not find a direct reference to this problem in the literature (with or without the modification), we modify the known proof for \textsc{planar hamiltonian cycle}, proved to be NP--complete by Garey, Johnson, and Tarjan \cite{GareyJohnsonTarjan1}, to show that our version of the \textsc{planar hamiltonian path} problem is also NP--complete.
In their construction, in every graph there exist edges that must be a part of any Hamiltonian cycle (if there are no Hamiltonian cycles in a graph, this is true vacuously).
Fix a graph $\Gamma'$.
We choose one such edge $e$ in $\Gamma'$ and replace it with two degree one vertices, each connected to one of the endpoints of $e$ as in Figure \ref{edgerep}.
Let $\Gamma$ denote the resulting graph.
The new graph $\Gamma$ contains a Hamiltonian path if and only if the original graph $\Gamma'$ contains a Hamiltonian cycle.
We may assume that $\Gamma$ has $n$ vertices and $m$ edges, with $m \ge n-1$ (if $m < n-1$, then there is no Hamiltonian path).

\begin{figure}[ht]
\centering
    \begin{subfigure}[b]{0.4\textwidth}
        \centering
        \includegraphics[height=1.0in]{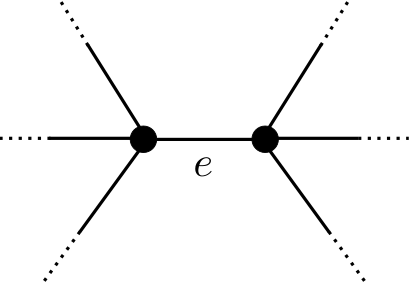}
        \caption{}
    \end{subfigure}%
~~~~~
    \begin{subfigure}[b]{0.5\textwidth}
        \centering
        \includegraphics[height=1.0in]{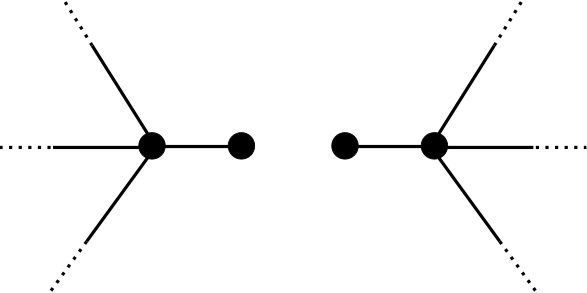}
        \caption{}
    \end{subfigure}
\caption{Given an edge $e$ that lies in any Hamiltonian cycle (\textsc{a}), replace $e$ with two vertices connected by edges to the vertices of $e$ as in (\textsc{b}) to obtain the graph $\Gamma$.}
\label{edgerep}

\end{figure}

We now reduce the modified Hamiltonian path problem to \textsc{bound for reidemeister moves for an unlink}.
We construct two link diagrams $D_1, D_2$ of an unlink from the graph $\Gamma$ such that $D_1$ and $D_2$ are related in at most $2(m-n+1)$ Reidemeister moves if and only if $\Gamma$ has a Hamiltonian path.
For the first diagram $D_1$, draw an unknot for each vertex of $\Gamma$.
We refer to these as the vertex components of $D_1$.
For each edge of $\Gamma$, draw an unknot that passes under the unknots from the adjacent vertices without linking with them as in Figure \ref{fig_reidboundb}.
We call these components edge components of $D_1$.
Let $D_2$ be the union of a chain of $2n-1$ unlinked components and $m-n+1$ unknots not touching anything else as in Figure \ref{fig_reidboundc}.

\begin{figure}[ht]
\centering
    \begin{subfigure}[b]{0.4\textwidth}
        \centering
        \includegraphics[height=1.5in]{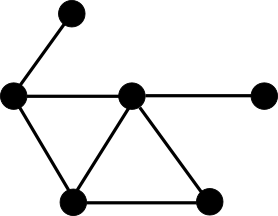}
        \caption{$\Gamma$}
    \end{subfigure}%
    ~
    \begin{subfigure}[b]{0.4\textwidth}
        \centering
        \includegraphics[height=1.5in]{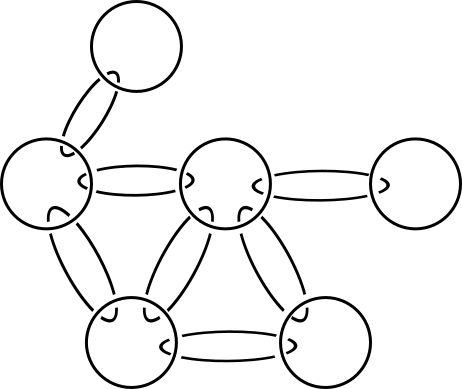}
        \caption{$D_1$}
        \label{fig_reidboundb}
    \end{subfigure}

        \begin{subfigure}[b]{0.8\textwidth}
        \centering
        \includegraphics[height=1in]{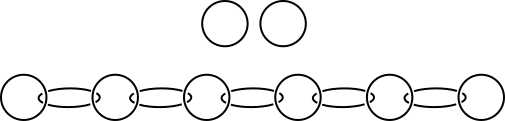}
        \caption{$D_2$}
        \label{fig_reidboundc}
    \end{subfigure}
\caption{We reduce the problem of finding a Hamiltonian path in $\Gamma$ to the problem of finding if $D_1$ and $D_2$ are related in at most $2(m-n+1)=4$ Reidemeister moves.}
\label{fig_reidbound}
\end{figure}

By counting the number of crossings in $D_1$ and $D_2$, we see that the only way to get from $D_1$ to $D_2$ in the required number of moves is to do solely Reidemeister moves of type II reducing the crossing number.
Indeed, starting with $D_1$ and performing $2(m-n+1)$ type II Reidemeister moves reducing the crossing number will result in a link diagram with the same crossing number as $D_2$.
If any other types of Reidemeister moves are performed instead, the final crossing number will be larger than that $D_2$.
Thus, every move must disconnect an edge component of $D_2$ from a vertex component.

The sequence of Reidemeister moves leads to the diagram $D_2$, where there is a connected chain of unknots.
Keeping track of which components were edge components and which were vertex components of $D_1$, we see that the resulting chain must alternate between edge and vertex components, and contain either $n$ vertex components and $n-1$ edge components or $n-1$ vertex components and $n$ edge components.
If a degree 1 vertex of $\Gamma$ is part of the chain, it can only lie at one of the endpoints of the chain.
Since there are two degree 1 vertices, this forces the case of $n$ vertex components and $n-1$ edge components.
Therefore, the chain in $D_2$  must correspond to a path of $\Gamma$ containing every vertex of the graph, i.e.
a Hamiltonian path.

Note that the first diagram has $4m$ crossings, and the second diagram has $4n-2$ crossings, hence both link diagrams have size polynomial in the size of the input graph.
Therefore, the reduction is polynomial.\end{proof}

\section{Acknowledgments}
Anastasiia
Tsvietkova acknowledges support from
NSF DMS-1664425 (previously 1406588) and NSF DMS-2005496 grants, and Insitute of
Advanced Study (under DMS-1926686 grant).
Both authors were supported by Okinawa Institute of Science and Technology funding. We are thankful to the referee for careful reading of the paper and for useful comments.

\

\

\

\bibliographystyle{amsplain}
\bibliography{./complexity}{}

\

Dale Koenig \\
Rapyuta Robotics \\
Tokyo-to Koto-ku \\
Furuishiba 2-chome 14-9-201, Japan \\
dale.koenig@rapyuta-robotics.com \\

Anastasiia Tsvietkova\\
Department of Mathematics and Computer Science\\
Rutgers University-Newark \\
101 Warren Street, Newark, NJ  07102, USA\\
a.tsviet@rutgers.edu

\end{document}